\newtheorem{thm}{Theorem}[section]
\newtheorem{prop}[thm]{Proposition}
\newtheorem{cor}[thm]{Corollary}
\newtheorem{rem}[thm]{Remark}
\newtheorem{defn}[thm]{Definition}
\newtheorem{conj}[thm]{Conjecture}
\newcommand{\C}{{\mathcal C}}
\newcommand{\Z}{{\mathcal Z}}
\newcommand\Rep{\operatorname{Rep}}
\newcommand{\Irr}{\mathcal{I}rr}
\newcommand\Vect{\operatorname{Vec}}
\newcommand\Hom{\operatorname{Hom}}
\newcommand\Id{\operatorname{Id}}
\newcommand\F{\mathbb{F}}
\newcommand\FPdim{\operatorname{FPdim}}
\newcommand\Tr{\operatorname{Tr}}
\newcommand\id{\operatorname{id}}
\begin{document}

\title[Frobenius-Schur indicators for near-group categories]{Frobenius-Schur indicators for near-group and Haagerup-Izumi fusion categories}

\author{Henry Tucker}
\address{Department of Mathematics, University of Southern California, Los Angeles, CA 90089, USA}
\email{htucker@usc.edu}

\keywords{tensor category; fusion rules; Frobenius-Schur indicator; Drinfel'd center; modular data; Haagerup subfactor}

\subjclass[2010]{18D10; 16T05}

\date{\today}

\dedicatory{Dedicated to Susan Montgomery}

\begin{abstract}
Ng and Schauenburg generalized higher Frobenius-Schur indicators to pivotal fusion categories and showed that these indicators may be computed utilizing the modular data of the Drinfel'd center of the given category. We consider two classes of fusion categories generated by a single non-invertible simple object: near groups, those fusion categories with one non-invertible object, and Haagerup-Izumi categories, those with one non-invertible object for every invertible object. Examples of both types arise as representations of finite or quantum groups or as Jones standard invariants of finite-depth Murray-von Neumann subfactors. We utilize the computations of the tube algebras due to Izumi and Evans-Gannon to obtain formulae for the Frobenius-Schur indicators of objects in both of these families.
\end{abstract}
 \maketitle



\section{Introduction}

{\it Fusion categories} appear in a wide variety of mathematics and physics. Their objects have the properties of complex representations of finite groups; in particular, they are semisimple and have duals and tensor products. Important examples of fusion categories include categories of representations for Drinfel'd-Jimbo quantum groups and Murray-von Neumann subfactors. From the point of view of these examples fusion categories encode symmetry data in the quantum setting in the same way that finite groups do in the classical setting. Classification problems for these categories do not come without considerable difficulty, therefore it is of great interest to find and understand categorical invariants.

The classical Frobenius-Schur indicator for finite groups was introduced in 1906. It determines if and how a given group representation is self-dual. This was generalized to the setting of semisimple Hopf algebras by Linchenko-Montgomery \cite{lm} and further to the setting of quasi-Hopf algebras \cite{ns-qh} and to pivotal fusion categories \cite{ns-piv} by Ng-Schauenburg.

The FS indicators are a {\it complete invariant} for the Tambara-Yamagami categories \cite{bj}. These are the fusion categories having exactly one non-invertible simple object $\rho$ where $\Hom_\C(\rho\otimes\rho, \rho)=0$. In the present paper we consider the {\bf near-group categories}: those with exactly one non-invertible simple object $\rho$ where $\dim_\mathbb{C}( \Hom_\C(\rho\otimes\rho, \rho))=m$. (The Tambara-Yamagami categories are near-groups with $m=0$.)  We provide the required background on this in Section 2.

Letting $G$ be the group of invertible objects in our near-group category, we find in Section 3 that for the near-group categories with $m=|G|-1$ the indicators are a complete invariant:

\newtheorem*{ng1-cor}{Corollary \ref{ng1-cor}}
\begin{ng1-cor}
The near group categories with $m=|G|-1$ are completely distinguished by their Frobenius-Schur indicators.
\end{ng1-cor}

To make the computations here we utilize \cite[Theorem 4.1]{ns-sph}: the Frobenius-Schur indicators of a spherical fusion category can be computed using the braiding in the Drinfel'd center of the category. A complete list of near-group fusion categories in the case where $m=|G|\leq 13$ was found in \cite{eg}. In each of these examples the modular data for the Drinfel'd centers are given by quadratic forms. From this we get in Section 4:

\newtheorem*{ng2-thm}{Theorem \ref{fs-ng2}}
\begin{ng2-thm}
In all known near group categories with $m=|G|$ the non-invertible object has Frobenius-Schur indicators given by quadratic Gauss sums.
\end{ng2-thm}
This theorem provides new evidence for \cite[Conjecture 2]{eg}: the {\it modular data} (matrix invariants from the braiding) of the centers of these near-groups are always given by quadratic forms. The form of the indicators strongly suggests that these centers are formed from some ``crossed product'' construction for modular categories. See also the ``pasting'' of modular data developed in \cite{eg3}.

Finally, in Section 5, we observe a similar result which supports a similar conjecture for the {\bf Haagerup-Izumi categories}, which are a related family of singly-generated fusion categories having one non-invertible object for {\it each} invertible object:
\newtheorem*{hi-thm}{Theorem \ref{hi-fs}}
\begin{hi-thm}
All known Haagerup-Izumi categories have Frobenius-Schur indicators given by quadratic Gauss sums.
\end{hi-thm}

\subsection{Acknowledgments}

The author wishes to thank his Ph.D. advisor Susan Montgomery; this article comprises the results of the author's dissertation. The author expresses much gratitude to Masaki Izumi, Richard Ng, and Peter Schauenburg for many useful conversations during the development of this work. Thanks are also due to David Penneys for pointing out the paper \cite{eg} to the author and to Vaughan Jones for introducing the author to the the work of Izumi. 

\section{Categorical invariants}

{\bf Tensor categories} are abelian monoidal categories $(\C, \otimes, \mathbbm{1})$ enriched over complex vector spaces; see \cite{egno} or \cite{bk} for the specifics of these definitions. The Mac Lane Strictness Theorem allows us the working assumption of {\bf strictness}: the associativity natural isomorphism is the identity morphism for every triple of objects. Thus we may use diagrammatic notation for the morphisms in these categories. Our notation is read from top to bottom, tensor products are given by side-by-side concatenation, and $\mathbbm{1}$ is not written at all. For examples, the morphisms
\[
\id_V: V\to V,\quad g:V\to U\otimes W, \quad f: \mathbbm{1} \to V_1\otimes \cdots \otimes V_n,
\]
are rendered in diagrammatic notation, respectively, as:
\[
\gbeg14
\got1{V}\gnl
\gcl{2}\gnl
\gob1{V}
\gend
\qquad
\gbeg35
\gvac1\got1{V}\gvac1\gnl 
\gvac1\gcl1\gvac1\gnl
\glmpb\gnot{g}\gcmpt\grmpb\gnl
\gcl1\gvac1\gcl1\gnl 
\gob1{U}\gvac1\gob1{W}
\gend
\qquad
\gbeg44
\gnl
\glmpb\gdnot f\gcmpb\gcmpb\grmpb\gnl
\gcl1\gcl1\gcl1\gcl1\gnl \gob1{V_1}\gob2{\cdots}\gob1{V_n}
\gend
\]

\subsection{Categorifications of semisimple rings}

Tensor categories should be thought of as a {\it categorification} of the notion of a unital algebra. The abelian and monoidal categorical structures are analogues of addition and multiplication, respectively. This point of view asks an obvious question: 
\begin{center}
\framebox{What are the tensor categories that categorify a given ring? }
\end{center}
This question has produced several different interesting classification results for {\bf semisimple} tensor categories: those where every object is a direct sum of some irreducible objects \cite{ty}\cite{iz2}\cite{eg}\cite{eg2}. The set of (isomorphism classes) of the irreducible objects is denoted $\Irr(\C)$. 

Here we consider {\bf fusion categories}: these are semisimple tensor categories $(\C,\otimes,\mathbbm{1})$ that are additionally:
\begin{itemize}
    \item {\bf finitely} semisimple: $|\Irr(\C)|<\infty$ and $\mathbbm{1}\in\Irr(\C)$, and
    \item {\bf rigid}: objects $V\in\C$ have duals $V^*\in\C$ with corresponding maps $ev_V: V^*\otimes V \to \mathbbm{1}$ and $db_V: \mathbbm{1} \to V \otimes V^*$. These are given, respectively, by the diagrams:
    \[
\gbeg33
\got1{V^{*}}\gvac1\got1V\gnl
\gwev3\gnl
\gend
\qquad
\gbeg33
\gnl
\gwdb3\gnl
\gob1 V \gvac1 \gob1{V^{*}}
\gend
\]
satisfying the relations:
\[
        \gbeg64
        \gvac4\got{1}{V} \gnl
        \gwdb3 \gvac1 \gcl1 \gnl
        \gcl1 \gvac1 \gwev3 \gnl
        \gob{1}{V}
        \gend =
        \gbeg14
        \got{1}{V}\gnl
        \gcl2\gnl
        \gob{1}{V}
        \gend
        \qquad \textrm{and} \qquad
        \gbeg64
        \got{1}{V^{*}} \gvac4 \gnl
        \gcl1 \gvac1 \gwdb3 \gnl
        \gwev3 \gvac1 \gcl1 \gnl
        \gvac4 \gob{1}{V^{*}}
        \gend =
        \gbeg14
        \got{1}{V^{*}}\gnl
        \gcl2\gnl
        \gob{1}{V^{*}}
        \gend
\]

\end{itemize} 

These two requirements are meant to make the objects of $\C$ behave like group representations. Indeed, the tensor category $\Rep(G)$ of complex representations of a finite group $G$ is the prototypical example of a fusion category: Maschke's theorem gives finite semisimplicity and the contragredient representation gives rigidity.

Now we make precise the notion of categorification. The {\bf Grothendieck ring} $K_0(\C)$ of a fusion category $\C$ is the $\mathbb{Z}$-based ring with basis $\Irr(\C)$, multiplication given by the tensor product in $\C$, and addition given by the direct sum in $\C$; that is, $K_0(\C)$ captures the ring structure of the category and forgets the morphisms. In the example of $\Rep(G)$ it is the character ring $R(G)$. We say that $\C$ {\bf categorifies} a ring $K$ if $K_0(\C)=K$.

The simplest class of based rings to consider are the group rings $\mathbb{Z}G$, which are categorified precisely by the {\bf pointed} fusion categories. These are the categories $\Vect_G^\omega$ of $G$-graded vector spaces where the associativity morphism for the tensor product of three irreducible objects is given by a 3-cocycle $\omega\in Z^3(G,\mathbb{C}^\times)$. These categories are classified to equivalence by the cohomology class of $[\omega]\in H^3(G,\mathbb{C}^\times)$. These facts are due to Mac Lane, cf. \cite[Prop. 4.10.3]{egno}.

Here we will consider another level of complication; based rings with (only) one non-invertible object:

\begin{defn}
Let $G$ be a finite group. A fusion category $\C$ is a {\bf near group} if its Grothendieck ring is given by: 
\[
K_0(\C) = NG(G,m) := \mathbb{Z}[G \cup \{ \rho \}]
\]
where multiplication is given by the group law and, where $g\in G$:
\[
\rho g = \rho = g \rho \quad \textrm{and} \quad \rho^2 = m\rho + \sum\limits_{h\in G} h
\]
\end{defn}

\begin{rem}\cite[Theorem 2(a)]{eg}
When $G$ is abelian the multiplicity $m$ is restricted to the following values:
\begin{itemize}
\item $m=|G|-1$, or
\item $m=k|G|$ for some $k\in \mathbb{N}$
\end{itemize}
\end{rem}

\noindent Consider the following important examples.
\begin{enumerate}
\item $\Rep(\mathcal{D}_8)$ and $\Rep(Q_8)$ categorify $NG(\mathbb{Z}/(2) \times \mathbb{Z}/(2),0)$. These are examples of {\bf Tambara-Yamagami} categories, which are the near groups with $m=0$ \cite{ty}.
\item $\Rep(S_3)$ and $\Rep(A_4)$ categorify $NG(\mathbb{Z}/(2),1)$ and $NG(\mathbb{Z}/(3),2)$, respectively. These are examples where $m=|G|-1$.
\item The principal even sectors of the $D_5$ Murray-von Neumann subfactor also categorify $NG(\mathbb{Z}/(2),1)$.
\item $\Rep(AGL_1(\F_q))$ categorifies $NG(\mathbb{Z}/(q-1), q-2)$.
\item The principal even sectors of the $A_4$, $E_6$, and Izumi-Xu subfactors categorify $NG(\mathbb{Z}/(1),1)$, $NG(\mathbb{Z}/(2), 2)$, and $NG(\mathbb{Z}/(3), 3)$, respectively.
\end{enumerate}

\subsection{Frobenius-Schur indicators}

It is known that the Grothendieck ring and the associativity natural isomorphism completely determine a fusion category up to monoidal equivalence \cite[\S 4.9-4.10]{egno}. The associativity data is encoded by the {\bf 6j symbols}, which are the matrix components of the linear maps induced by the associativity natural isomorphism on triples of simple objects. Directly classifying all 6j symbols having a given Grothendieck ring is difficult in general as it requires finding solutions to large systems of non-linear equations.

The near groups are {\bf spherical} fusion categories. These are the fusion categories equipped with a natural isomorphism $j: \; \Id_\C \xrightarrow{\sim} (\cdot)^{**}$ (that is, a {\bf pivotal structure)} such that
the left and right quantum traces agree for all $V \in \Irr(\C)$:

\[
qtr^r(f) := \quad
\gbeg45
\gwdb3\gnl
\gnot{f}\gvac2\gcl3\gnl
\gcl1\gvac2\got1{V^{*}}\gnl
\gnot{j_V}\gnl
\gwev3
\gend
\quad = \quad
\gbeg45
\gvac1\gwdb3\gnl
\gvac1\gcl3\gvac1\gnot{f}\gnl
\got1{V^*}\gvac2\gcl1\gnl
\gvac3\gnot{j^{-1}_V}\gnl
\gvac1\gwev3
\gend
 \quad =: qtr^l(f)
\]

These categorify unital algebras {\it with involution}. The spherical structure defines a categorical dimension of an object $V\in \C$ by $dim(V)=qtr(\id_V)$. 

For $\C$ a spherical fusion category we can define a finer categorical invariant than the Grothendieck ring:

\begin{defn} \cite{ns-piv}
 For $V \in \C$ we define the {\bf $k^{\text{th}}$ Frobenius-Schur indicator} by the following linear trace:
\[
\nu_k(V)=\Tr\left(E^{(k)}_V: \underbrace{\gbeg43\glmpb\gdnot f\gcmpb\gcmpb\grmpb\gnl
  \gcl1\gcl1\gnot{\cdots}\gvac1\gcl1\gnl
  \gob1V\gob1V\gob1\cdots\gob1V\gend}_n\mapsto
  \gbeg65\gwdb6\gnl
  \gcl1\glmpb\gdnot f\gcmpb\gcmpb\grmpb\gcl1\gnl
  \gev\gcl2\gnot\cdots\gvac1\gcl2\gnot{j^{-1}_V}\gnl
  \gvac5 \gcl1\gnl
  \gvac2\gob1V\gob1\cdots\gob1V\gob1V\gend
  \right)
 \]
where $E_V^{(k)}$ is a linear endomorphism of the finite-dimensional vector space $\Hom(\,\mathbbm{1},V^{\otimes n})$ taking $V^{\otimes n}$ to be $n$-fold tensor product of $V$ with all parentheses to the right.
\end{defn}

The Tambara-Yamagami categories are an example of a fusion category family where the Frobenius-Schur indicators are a finer invariant than the Grothendieck ring \cite{ns-qh}. In \cite{bj} it was shown that the indicators are a {\it complete} invariant for the Tambara-Yamagami categories. That is, the monoidal equivalence classes of fusion categories associated to the ring $NG(G,0)$ are completely distinguished by their Frobenius-Schur indicators. We give this property a name:

\begin{defn}
A ring $K$ exhibits {\bf FS indicator rigidity} if its categorifications can all be distinguished by their Frobenius-Schur indicators.
\end{defn}
\noindent  The central question that motivates the present article is immediate:
\begin{center}
\framebox{What rings have FS indicator rigidity?}
\end{center}
We will see in Corollary \ref{ng1-cor} that the near-group rings $NG(G,|G|-1)$ exhibit this property.

\subsection{Drinfel'd centers and modular data} The Drinfel'd center $\Z(\C)$ of a spherical fusion category $\C$ is {\bf modular} \cite[Prop. 5.10]{mu}, that is it is again spherical with a {\bf non-degenerate braiding} $c_{V,W}:V\otimes W \to V \otimes W$ which is given in diagrams by:
\[
  c_{V,W}=\gbeg23\got1V\got1W\gnl\gbr\gnl\gob1W\gob1V\gend
\]
Modular categories come with a projective representation of the modular group called {\bf modular data}. The representation is defined by sending the generators $\mathfrak{s}, \mathfrak{t} \in SL_2(\mathbb{Z})$ to the {\bf $S$- and $T$-matrices}:

\[
S =
\left(
 \gbeg68
\gvac1\gwdb5\gnl
\gvac1\gcl2\gwdb3\gcl6\gnl
\got1V\gvac1\gcl1\got1W\gcl4\gnl
\gvac1\gbr\gvac1\gnl
\gvac1\gbr\gnl
\gvac1\gcl2\gcl1\gnl
\gvac2\gwev3\gnl
\gvac1\gwev5\gnl
\gend 
\right)_{V, W \in \Irr(\C)}
\qquad
T = \left( \theta_V = \delta_{V,W}
\gbeg37
  \gvac2\got1V\gnl
  \gdb\gcl1 \gnl
  \gcl1\gibr\gnl
  \gev\gcl1\gnl
  \gvac2 \gnot{j_V^{-1}}\gnl
  \gvac2 \gcl1\gnl
  \gvac2 \gob1V
  \gend
  \right)_{V,W \in \Irr(\C)}
\]
Most crucially, we can obtain the Frobenius-Schur indicators from the modular data of the Drinfel'd center:

\begin{thm}\cite[Theorem 4.1]{ns-sph}\label{fs-sph}
Let $\C$ be a spherical fusion category and let $F: \Z(\C) \to \C$ be the forgetful functor. Then:
\[
\nu_k(X) = \frac{1}{qdim(\C)} \sum_{V \in \Irr(\Z(\C))} \theta_V^k qdim(V) dim_\mathbb{C}(\Hom_\C(F(V),X))
\]
where $\theta_V$ are the entries of the $T$-matrix for $\mathcal{Z}(\C)$.
\end{thm}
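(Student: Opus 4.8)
The plan is to evaluate the defining categorical trace $\nu_k(X)=\Tr\bigl(E^{(k)}_X\bigr)$ by transporting it into the modular category $\Z(\C)$, where the twists $\theta_V$ and quantum dimensions $\qdim(V)$ give direct access to the stated sum. First I would record that $E^{(k)}_X$ is the ``rotation by one strand'' endomorphism of the finite-dimensional space $\Hom_\C(\mathbbm{1}, X^{\otimes k})$, so that $\nu_k(X)$ is the ordinary linear trace of a cyclic operator. By the graphical calculus this trace equals a single closed diagram (an endomorphism of $\mathbbm{1}$, hence a scalar) obtained by gluing the $k$ input strands cyclically to the $k$ output strands; sphericity guarantees that this closure is well defined independent of the planar isotopy used. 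The essential feature to extract at this stage is that the one-click gluing fuses the $k$ copies of $X$ into a \emph{single} cyclically closed $X$-line wound around a cylinder, which is precisely why a single object $X$ (rather than $X^{\otimes k}$) survives in the multiplicity $\dim_\mathbb{C}\bigl(\Hom_\C(F(V),X)\bigr)$ of the final formula.

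The main step is to reinterpret this cylinder diagram inside $\Z(\C)$. Here I would resolve the identity along the cylinder using a complete system of idempotents indexed by $\Irr(\Z(\C))$, equivalently by invoking the two-sided adjunction between the forgetful functor $F\colon \Z(\C)\to\C$ and the induction functor $I\colon \C\to\Z(\C)$, which satisfies $\Hom_\C(F(V),X)\cong\Hom_{\Z(\C)}(V,I(X))$. Decomposing $I(X)=\bigoplus_{V\in\Irr(\Z(\C))}V^{\oplus n_V}$ with $n_V=\dim_\mathbb{C}\Hom_\C(F(V),X)$ produces both the multiplicity and the sum over $\Irr(\Z(\C))$. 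The content that must then be verified is that, under this identification, the rotation operator acts on the $V$-isotypic component as the scalar $\theta_V^k$: the canonical half-braiding carried by the central object $V$ converts each of the $k$ windings of the $X$-line past $V$ into one application of the ribbon twist, and the balancing axiom identifies the total contribution with $\theta_V^k$ as recorded in the $T$-matrix of $\Z(\C)$.

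It then remains to pin down the two normalizing factors. Closing each isotypic strand into a loop in the spherical category $\Z(\C)$ contributes its categorical loop value $\qdim(V)$, which accounts for the weight $\qdim(V)$ on each term. The global prefactor $1/\qdim(\C)$ I would trace to the canonical algebra $A=I(\mathbbm{1})\in\Z(\C)$ implicit in the closure: since $F I(\mathbbm{1})=\bigoplus_{Y\in\Irr(\C)}Y\otimes Y^{*}$ has categorical dimension $\sum_Y\qdim(Y)^2=\qdim(\C)$ and $F$ preserves quantum dimension, the projection onto the invariants $\Hom_\C(\mathbbm{1},X^{\otimes k})$ carries exactly the normalization $1/\qdim(A)=1/\qdim(\C)$ (consistent with Müger's $\qdim(\Z(\C))=\qdim(\C)^2$). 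Assembling the three contributions yields $\nu_k(X)=\tfrac{1}{\qdim(\C)}\sum_{V}\theta_V^k\,\qdim(V)\,\dim_\mathbb{C}\Hom_\C(F(V),X)$.

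The hardest part, and the crux of the whole argument, is the middle step: making rigorous the claim that the one-click rotation is diagonalized by the central decomposition with eigenvalue $\theta_V^k$ on the $V$-isotypic block. This is where the modularity of $\Z(\C)$ (nondegeneracy of the braiding, ensuring the isotypic decomposition is orthogonal for the categorical trace pairing) and the precise compatibility between the pivotal structure $j$ of $\C$, the induced ribbon structure on $\Z(\C)$, and the seam duality $j_V^{-1}$ appearing in the definition of $E^{(k)}_X$ must all be reconciled through careful diagrammatic bookkeeping; keeping the powers of $\qdim(\C)$ straight in the accompanying normalizations is the secondary technical hazard.
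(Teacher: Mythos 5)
The paper itself offers no proof of this statement: it is imported verbatim from \cite[Theorem 4.1]{ns-sph}, so your proposal has to be judged against Ng--Schauenburg's argument. At the level of strategy you have in fact reconstructed it: their proof also runs through the induction functor $I$ two-sided adjoint to the forgetful functor $F$, Frobenius reciprocity $\Hom_{\Z(\C)}(V,I(X))\cong\Hom_\C(F(V),X)$, the decomposition $I(X)\cong\bigoplus_V V^{\oplus n_V}$ with $n_V=\dim_\mathbb{C}\Hom_\C(F(V),X)$, and the reduction of the whole theorem to the single identity $\Tr\bigl(E^{(k)}_X\bigr)=\frac{1}{\qdim(\C)}\Tr_q\bigl(\theta^k_{I(X)}\bigr)$, after which the stated formula is immediate because the twist acts on each simple summand $V$ by the scalar $\theta_V$ and its quantum trace contributes $\qdim(V)$.

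However, the mechanism you propose for that key identity --- that $E^{(k)}_X$ ``is diagonalized by the central decomposition with eigenvalue $\theta_V^k$ on the $V$-isotypic block'' --- is not merely unverified; it is false as stated, and pursuing it would fail. $E^{(k)}_X$ is an endomorphism of $\Hom_\C(\mathbbm{1},X^{\otimes k})$ whose $k$-th power is the identity (this is Ng--Schauenburg's pivotal-case result \cite{ns-piv}, the source of the cyclotomic integrality of $\nu_k$), so its eigenvalues are $k$-th roots of unity, whereas $\theta_V^k$ need not be one; moreover the weights $\qdim(V)$ are in general irrational, so the right-hand side of the theorem is not the blockwise trace of any operator on $\Hom_\C(\mathbbm{1},X^{\otimes k})$, and that space admits no isotypic decomposition with multiplicities $n_V$ --- already for $k=1$ and $X=\mathbbm{1}$ the space is one-dimensional, yet every simple $V$ with $\mathbbm{1}$ a summand of $F(V)$ contributes a term $\theta_V\qdim(V)$, the cancellations being exactly the content of the theorem. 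The equality holds only as a trace identity, and producing it is a genuine computation: one realizes $FI(X)\cong\bigoplus_{Y\in\Irr(\C)}Y\otimes X\otimes Y^*$ with its explicit half-braiding, unfolds $\Tr_q\bigl(\theta^k_{I(X)}\bigr)$ into a sum of closed diagrams over $Y$ and over bases of the relevant $\Hom$-spaces, and reassembles the result into $\qdim(\C)\,\Tr\bigl(E^{(k)}_X\bigr)$; equivalently, one inserts correctly normalized tube-algebra idempotents, carrying the weights $\qdim(V)/\qdim(\C)$, into the closed annular diagram. You correctly flag this step as the hardest, but the slogan that windings past the seam become twists is the \emph{conclusion} of that computation, not an argument for it; as written, the central step of your proposal has no valid proof route, even though the surrounding bookkeeping (reciprocity, the loop values $\qdim(V)$, and the $1/\qdim(\C)$ normalization via $\qdim(FI(\mathbbm{1}))=\qdim(\C)$) is consistent with the actual argument.
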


\section{Frobenius-Schur indicators for near groups with $m=|G|-1$}

Let $\C$ be a fusion category such that $K_0(\C)=NG(G,|G|-1)$. It is shown in \cite[Proposition 2]{eg} that such a fusion category can only exist if $G \cong \F_{|G|+1}^\times$ is the multiplication group of a finite field. (So $G$ is cyclic, and thus $H^2(G, \mathbb{T})=1$.) Let $p = \mathrm{char}(\F_{|G|+1})$.

Consider again the category $\Rep(AGL_1(\F_{|G|+1}))$. These provide the main examples of $m=|G|-1$ near groups. In fact, by \cite[Corollary 7.4]{ego} and \cite[Proposition 5]{eg}, these are the {\it only} fusion categories with this Grothendieck ring unless $|G|=1,2,3,7$.

\subsection{Indicators for $\C \simeq \Rep(AGL_1(\F_q))$}

We may use classical methods to determine the indicators for $\C$ that is tensor equivalent to the category of representations of an affine general linear group of degree 1 over the finite field $\F_q$. Recall that $\theta^G_k(h) = |\{g \in G \, | \, g^k = h\}|$.

\begin{prop}
Suppose $\C$ is such that $K_0(\C)=NG(G,|G|-1)$ and $|G| \neq 1, 2, 3, 7$. Then $\C \simeq_\otimes \Rep(AGL_1(\F_{|G|+1}))$ and:
\[
\nu_k(\rho) = \theta_k^G(e) - 1 + \delta_{\lfloor \frac{k}{p} \rfloor, \frac{k}{p}}
\]
\end{prop}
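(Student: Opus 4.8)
The plan is to reduce the statement to a classical character computation and then evaluate it explicitly. By the results cited just above (\cite[Corollary 7.4]{ego} and \cite[Proposition 5]{eg}), the hypothesis $|G|\neq 1,2,3,7$ forces $\C\simeq_\otimes\Rep(AGL_1(\F_q))$ with $q=|G|+1$, so I may work inside the finite group $H:=AGL_1(\F_q)$. Equipping $\Rep(H)$ with its canonical spherical structure, the Ng--Schauenburg indicator of \cite{ns-piv} recovers the usual higher Frobenius--Schur indicator of a group representation, $\nu_k(\rho)=\frac{1}{|H|}\sum_{g\in H}\chi_\rho(g^k)$, where $\chi_\rho$ is the character of the unique $(q-1)$-dimensional irreducible $\rho$. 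Thus it suffices to understand the character $\chi_\rho$ together with the power map $g\mapsto g^k$ on $H$.

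First I would identify $\rho$ geometrically. The group $H$ acts on the affine line $\F_q$ by $x\mapsto ax+b$, and this action is $2$-transitive (indeed sharply so, as $|H|=q(q-1)$); hence the permutation representation on $\mathbb{C}^{\F_q}$ splits as $\mathbbm{1}\oplus\rho$ with $\rho$ irreducible of dimension $q-1$, which is exactly the non-invertible simple. Consequently $\chi_\rho(g)=|\mathrm{Fix}_{\F_q}(g)|-1$, and a one-line count of the fixed points of $x\mapsto ax+b$ gives $\chi_\rho=q-1$ at the identity, $\chi_\rho=-1$ on the nontrivial translations (those with $a=1,\ b\neq 0$), and $\chi_\rho=0$ whenever $a\neq 1$.

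Next I would compute the power map in the coordinates $g=(b,a)$, $x\mapsto ax+b$. An easy induction on $k$ gives $(b,a)^k=\big((1+a+\cdots+a^{k-1})\,b,\ a^k\big)$. Feeding this into the description of $\chi_\rho$ above, $\chi_\rho(g^k)$ vanishes unless $a^k=1$. When $a^k=1$ with $a\neq 1$, the geometric-series coefficient equals $(a^k-1)/(a-1)=0$, so $g^k$ is the identity and contributes $q-1$; there are $\gcd(k,|G|)-1$ such values of $a$ (the nontrivial $k$-th roots of unity in the cyclic group $\F_q^\times$ of order $|G|$), each paired with all $q$ values of $b$. When $a=1$ the coefficient is the image of $k$ in $\F_q$, so $g^k=(kb,1)$: if $p\mid k$ then every such $g^k$ is the identity, whereas if $p\nmid k$ only $b=0$ gives the identity and the remaining $q-1$ translations contribute $-1$ each.

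Finally I would assemble the two strata. The sum $\sum_{g}\chi_\rho(g^k)$ equals $q(q-1)\gcd(k,|G|)$ when $p\mid k$ and $q(q-1)\big(\gcd(k,|G|)-1\big)$ when $p\nmid k$; dividing by $|H|=q(q-1)$ yields $\gcd(k,|G|)$ and $\gcd(k,|G|)-1$ respectively. It remains to match these with the stated formula: since $G\cong\F_q^\times$ is cyclic of order $|G|$, the power count is $\theta_k^G(e)=\gcd(k,|G|)$, and $\delta_{\lfloor k/p\rfloor,\,k/p}$ is precisely the indicator of the condition $p\mid k$, so in both cases $\nu_k(\rho)=\theta_k^G(e)-1+\delta_{\lfloor k/p\rfloor,\,k/p}$. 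The only delicate bookkeeping is at the $a=1$ stratum, where the interaction between the geometric-series value $S_k(1)=k$ and the characteristic $p$ is exactly what produces the correction term $\delta_{p\mid k}$; everything else is a routine fixed-point count.
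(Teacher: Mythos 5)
Your proof is correct, and it follows the paper's skeleton at the top level (invoke \cite[Corollary 7.4]{ego} and \cite[Proposition 5]{eg} to reduce to $\Rep(AGL_1(\F_q))$, then apply the classical formula $\nu_k(\rho)=\frac{1}{|H|}\sum_g\chi_\rho(g^k)$ and the power-map identity $(b,a)^k=\bigl((1+a+\cdots+a^{k-1})b,\,a^k\bigr)$ with the same stratification into $a^k=1,\,a\neq 1$ versus $a=1$), but you evaluate the character by a genuinely different key lemma. The paper identifies $\chi_\rho$ via Serre's method of little groups, obtaining the induced-character formula $\rho(a,b)=\frac{\delta_{1,b}}{q}\sum_{(x,y)}\eta(y^{-1}a)$ for a nontrivial additive character $\eta$, and then must evaluate $\sum_{n\in\F_q}\rho(kn,1)$ by an orthogonality argument: for $p\nmid k$ it shows each translate $b\cdot\eta$ of $\eta$ under the (faithful) $\F_q^\times$-action is nontrivial, so the sum vanishes. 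You instead realize $\rho$ as the standard constituent of the (sharply) $2$-transitive permutation module on the affine line, so that $\chi_\rho(g)=|\mathrm{Fix}(g)|-1$ takes the values $q-1$, $-1$, $0$ on the identity, nontrivial translations, and elements with $a\neq 1$ respectively; the entire computation then reduces to counting fixed points, and the dichotomy $p\mid k$ versus $p\nmid k$ appears transparently through $g^k=(kb,1)$ on the translation stratum rather than through character orthogonality. Your route is more elementary and arguably more conceptual (it explains the correction term $\delta_{\lfloor k/p\rfloor,k/p}$ geometrically), while the paper's little-groups formula has the advantage of producing the full character table data in a form that generalizes to other semidirect products. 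Your bookkeeping checks out in both cases: $\sum_g\chi_\rho(g^k)$ equals $q(q-1)\gcd(k,|G|)$ when $p\mid k$ and $q(q-1)(\gcd(k,|G|)-1)$ when $p\nmid k$, and $\theta_k^G(e)=\gcd(k,|G|)$ since $G\cong\F_q^\times$ is cyclic. One point worth making explicit in a final write-up: the Ng--Schauenburg indicators of $\Rep(H)$ with its canonical spherical structure agree with the classical group-theoretic indicators (this is the content of \cite{ns-qh} specialized to group algebras, and is also implicit in the paper's appeal to \cite[Lemma 4.4]{isaacs}), so that the reduction from the categorical statement to the character sum is legitimate.
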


\begin{proof}
Let $|G| + 1 = q$. Since $AGL_1(\F_q) \cong \F_q \rtimes \F_q^\times$ we may use Serre's method of little groups \cite[\S 8.2, Proposition 25]{serre} to see that the character $\rho$ for the irreducible representation with degree $>1$ is given by:
\[
\rho(a, b) =  \frac{\delta_{1,b}}{q} \sum\limits_{(x,y) \in AGL_1(\F_q)}  \eta(y^{-1}a)
\]
for any {\it non-trivial} linear character $\eta \in \widehat{(\F_q, +)}$.

Now we may apply the classical formula for $\nu_k(\rho)$ \cite[Lemma 4.4]{isaacs}:
\begin{align*}
\nu_k(\rho) &=
\frac{1}{q(q-1)} \sum\limits_{(a,b) \in \F_q \rtimes \F_q^\times} \rho((a,b)^k)\\
&= \frac{1}{q(q-1)} \sum\limits_{(a,b),\; b^k = 1} \rho((1+b+b^2+\cdots+b^k)a, 1)\\
&= \frac{1}{q(q-1)} \left( \sum\limits_{(a,b), \; b^k=1, \; b\neq 1} \rho(0,1) + \sum\limits_{n \in \F_q} \rho(kn, 1) \right)\\
\end{align*}
Since $\rho$ is a degree $q-1$ character, the left hand sum is $q(q-1)(\theta^{\F_q^\times}_k(1) - 1)$. The right hand sum is equal to:
\begin{equation*}
\sum\limits_{n\in\F_q} \sum\limits_{b\in \F_q^\times} \eta(b^{-1}kn) =
\begin{cases}
    q(q-1) & \text{if } p\,|\,k\\ 0 & \text{if } p\nmid k
\end{cases}
\end{equation*}
The $p\,|\,k$ case is clear since then $\eta(b^{-1}kn)$ is identically 1. On the other hand, $\eta(b^{-1}kn) = b\cdot \eta(kn)$ under the transpose of the left regular action of $\F_q^\times \cong GL_1(\F_q)$ on $\widehat{\F_q}\cong\F_q$. Since $(p, k)=1$ we have that: 
\[
\sum\limits_{n \in \F_q} b\cdot \eta(kn) = \sum\limits_{n \in \F_q} b\cdot \eta(n)
\]
and since the action is faithful by definition, we know that $b\cdot\eta$ is not the trivial representation for any $b \in \F_q^\times$. Hence by orthogonality of characters the sum is 0. The formula is now clear since the given Kronecker delta is 1 if $p\,|\,k$ and is 0 otherwise.
\end{proof}

\subsection{Indicators in general from modular data of $\mathcal{Z}(\C)$}
For $|G|=1, 3, 7$ there is 1 additional monoidal equivalence class, and for $|G| = 2$ there are 2 additional monoidal equivalence classes. The modular data for Drinfel'd centers of $m=|G|-1$ near groups was computed in \cite[Theorem 5]{eg}. We will appeal to Theorem \ref{fs-sph} to compute the indicators for a general $m=|G|-1$ near group.

Let $\epsilon \in \widehat{G}$ be the trivial character and let $\F_{|G|+1}^{+}$ be the additative group of the finite field. Excluding the case where $|G|=7$ and $s=-1$ we have the following data for the center $\Z(\C)$:

\begin{center}
    \setlength{\arrayrulewidth}{0.5mm}
    \setlength{\tabcolsep}{12pt}
    \renewcommand{\arraystretch}{2}
    \begin{tabular}{ | c | c | c | c |}
    \hline\hline
    $X \in \Irr(\Z(\C))$ &  $F(X)$  &  $c_{X,-}$ given by  &  $\theta_X$ \\ \hline\hline
    $A_g \quad (g\in G)$ & $g$ & 1 & 1 \\ \hline
    $\Sigma$ & $\displaystyle \bigoplus_{x \in G} x$ & 1 & 1 \\ \hline
    $B^\omega_g \quad (g \in G)$ & $\rho + g$ & $\omega \in \widehat{G}\backslash\{\epsilon\}$ & $\overline{\omega(g)}$ \\ \hline
    $C^\psi \quad \left(\psi \in \widehat{\F_{|G|+1}^{+}}\right)$ & $\rho$ & $\psi \in \widehat{\F_{|G|+1}^{+}}$ & $\overline{\zeta_1 \psi(1)}$ \\
    \hline
    \end{tabular}
\end{center}
where the half-braiding for $C^\psi$ on occurrences of $\rho$ in objects of $\Z(\C)$ is a morphism:
\[
e_{C^\psi}(\rho) \in \Hom_\C(\rho\otimes\rho, \rho\otimes\rho) \cong \mathbb{C}^{|G|} \oplus M_m(\mathbb{C})
\]
given by:
\[
e_{\C^\psi}(\rho) = \zeta_1\psi(1)\left(\bigoplus\limits_{k \in G} (-1)^{mk}\Id_k\right) \; \bigoplus \; \left[ \zeta_\gamma (\psi\circ\sigma)(\gamma) \delta_{\sigma^2(\gamma)^*, \mu} \Id_\rho \right]_{\gamma, \mu}
\]

For the case where $|G| = 7$ and $s = -1$ we have:

\begin{center}
    \setlength{\arrayrulewidth}{0.5mm}
    \setlength{\tabcolsep}{12pt}
    \renewcommand{\arraystretch}{2}
    \begin{tabular}{ | c | c | c | c |}
    \hline\hline
    $X \in \Irr(\Z(\C))$ &  $F(X)$  & $c_{X,-}$ given by  & $\theta_X$ \\ \hline\hline
    $A_g \quad (g\in G)$ & $g$ & 1 & 1 \\ \hline
    $\Sigma$ & $\displaystyle \bigoplus_{x \in G} x$ & 1 & 1 \\ \hline
    $B^\omega_g \quad (g\in G,\, \omega \in \widehat{G}\backslash\{\epsilon\})$ & $\rho + g$ & $\omega \in \widehat{G}\backslash\{\epsilon\}$ & $\overline{\omega(g)}$ \\ \hline
    $E_1$ & $\rho + \rho$ & 1 & $i$ \\ \hline
    $E_2$ & $\rho + \rho$ & 1 & $-i$\\
    \hline
    \end{tabular}
\end{center}

\noindent With the preceding data in hand we may now apply Theorem \ref{fs-sph} to see:

\begin{thm}\label{fs-ng1}
Suppose $\C$ is such that $K_0(C)=NG(G, |G|-1)$. Then the indicators for the non-invertible object $\rho$ are given by:
\begin{enumerate}
\item If $|G|\neq7$ or $s=1$ then: 
\[
\nu_k(\rho) = (\theta_k^G(e) - 1) + \overline{\zeta_1}^k \delta_{\lfloor \frac{k}{p} \rfloor, \frac{k}{p}}
\]
\item If $|G|=7$ and $s=-1$ then: 
\[
\nu_k(\rho) = (\theta_k^G(e) - 1) + (-1)^{k/2}\delta_{\lfloor \frac{k}{2} \rfloor, \frac{k}{2}}
\]
\end{enumerate}
\end{thm}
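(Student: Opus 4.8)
\section*{Proof proposal}

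The plan is to feed the tabulated modular data directly into Theorem \ref{fs-sph}, which expresses $\nu_k(\rho)$ as a weighted sum over $\Irr(\Z(\C))$ of the ribbon twists $\theta_X^k$, each weighted by $\qdim(X)$ and the multiplicity $\dim_\mathbb{C}(\Hom_\C(F(X),\rho))$. The first observation is that $\rho$ occurs in $F(X)$ only for the objects $B_g^\omega$ and $C^\psi$ in the generic case (and $B_g^\omega$, $E_1$, $E_2$ in the exceptional case), since $F(A_g)$ and $F(\Sigma)$ are sums of invertibles and hence satisfy $\Hom_\C(F(X),\rho)=0$. Thus the entire indicator splits into one sum over the $B$-family and one over the $C$-family (resp.\ the $E$-family), and only the character sums for these two families need to be evaluated.

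Before summing I would record the relevant quantum dimensions, using that the forgetful functor $F$ preserves $\qdim$. From $\rho^2 = (|G|-1)\rho + \sum_{h} h$ one solves $d_\rho^2 - (|G|-1)d_\rho - |G| = 0$ to get $d_\rho = |G|$, so that $\qdim(B_g^\omega)=|G|+1$, $\qdim(C^\psi)=|G|$, and the global normalization is $\qdim(\C)=|G|+d_\rho^2 = |G|(|G|+1)$. These are arranged so that, once the character sums below have been carried out, the global factor $1/\qdim(\C)$ cancels against the accumulated weights, leaving the unadorned $(\theta_k^G(e)-1)$ and a single Kronecker delta in the final formula.

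The two character sums are the technical heart of the argument. For the $B$-family I would use $\overline{\omega(g)}^k = \overline{\omega(g^k)}$ together with column (``second'') orthogonality of the irreducible characters of $G$, namely $\sum_{\omega\in\widehat{G}}\overline{\omega(g^k)} = |G|\,\delta_{g^k,e}$; subtracting the omitted trivial character $\epsilon$ and summing over $g\in G$ produces exactly $|G|(\theta_k^G(e)-1)$ once the $\qdim$ weight is restored. For the $C$-family the subtlety is that $\psi(1)^k = \psi(k\cdot 1)$, where $k\cdot 1$ is computed in $\F_{|G|+1}$; orthogonality of the \emph{additive} characters, $\sum_{\psi}\overline{\psi(k\cdot 1)} = (|G|+1)\,\delta_{k\cdot 1,\,0}$, then collapses to the condition $p\mid k$, i.e.\ $\delta_{\lfloor k/p\rfloor, k/p}$, with the scalar $\overline{\zeta_1}^k$ factoring out cleanly. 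I expect this identification---correctly tracking that the $C$-twist contributes $\overline{\zeta_1}^k$ times a delta supported on $p\mid k$---to be the step most prone to error. For the exceptional $|G|=7,\,s=-1$ case the $C$-family is replaced by $E_1,E_2$ with twists $\pm i$ and multiplicity two; here the only new point is the elementary collapse $i^k+(-i)^k = 2(-1)^{k/2}$ for even $k$ and $0$ for odd $k$, which after the same normalization yields the stated $(-1)^{k/2}\delta_{\lfloor k/2\rfloor, k/2}$ (noting $p=2$). Assembling the surviving pieces and dividing by $\qdim(\C)$ gives both formulae.
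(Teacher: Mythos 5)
Your proposal is correct and follows essentially the same route as the paper: feed the tabulated modular data into Theorem \ref{fs-sph}, note that only the $B$- and $C$-families (resp.\ $B$, $E_1$, $E_2$) have $\rho$ in their image under $F$, and evaluate the two character sums, with the multiplicative sum giving $|G|(\theta_k^G(e)-1)$ and the additive sum collapsing via orthogonality to $(|G|+1)\,\overline{\zeta_1}^k\,\delta_{\lfloor k/p\rfloor, k/p}$. The only cosmetic difference is that you evaluate the $B$-sum by direct column orthogonality $\sum_{\omega}\overline{\omega(g^k)}=|G|\,\delta_{g^k,e}$, whereas the paper routes the same computation through the classical indicators $\nu_k(\chi_h)$ of the linear characters of $G$; these are trivially equivalent.
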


\begin{proof}
(1) Suppose $|G|\neq 7$ or $s=1$. Then we have:
\begin{align*}
\nu_k(\rho) &= \frac{1}{\FPdim{\C}} \left( \sum_{\substack{g \in G\\ \omega \in \widehat{G}\backslash\{\epsilon\}}} \theta_{B_g^\omega}^k\FPdim(B_g^\omega) + \sum_{\psi \in \widehat{\F_{|G|+1}^{+}}} \theta_{C^\theta}^k \FPdim(C^\psi) \right)\\
 &= \frac{1}{\FPdim(\C)} \left( (|G|+1) \sum_{\substack{g \in G\\ \omega \in \widehat{G}\backslash\{\epsilon\}}} \overline{\omega(g)}^k + |G|\overline{\zeta_1}^k \sum_{\psi \in \widehat{\F_{|G|+1}^{+}}} \overline{\psi(1)}^k \right)
\end{align*}

Consider the first summand. Since $G$ is abelian we may choose an isomorphism $h \mapsto \chi_h$ from $G \to \widehat{G}$. Then we have:

\begin{align*}
\sum_{\substack{g \in G\\ \omega \in \widehat{G}\backslash\{\epsilon\}}} \overline{\omega(g)}^k &= \left( \sum_{g \in G}\sum_{\omega \in \widehat{G}} \overline{\omega(g)}^k \right) - \left(\sum_{g \in G}\overline{\epsilon(g)}^k \right)\\
&= \left( \sum_{g \in G} \sum_{h \in G} \overline{\chi_h(g)}^k \right) - |G|\\
&= \left( |G|\sum_{h \in G} \overline{\nu_k(\chi_h)} \right) - |G|\\
&= |G|( \theta^G_k(e) - 1)
\end{align*}

Consider the second summand. Since $\F_{n+1}^+$ is the additive group of a finite field we have that $n + 1 = p^l$ for some prime $p$ and positive integer $l$ and that $\F_{n+1}^+ \cong (\mathbb{Z}_p)^l$ as groups. Under this identification the multiplicative unit $1 \in \F_{n+1}^+$ is a direct sum of generators of the copies of $\mathbb{Z}_p$.

\begin{align*}
\sum_{\psi \in \widehat{\F_{|G|+1}^{+}}} \overline{\psi(1)}^k = \sum_{\psi \in \widehat{\F_{|G|+1}^{+}}} \overline{\psi(k1)} &= \bigg\{
     \begin{array}{lr}
       0 & \text{if } k1 \neq 0\\
       p^l & \text{if } k1 = 0
     \end{array}\\
   &= \bigg\{
     \begin{array}{lr}
       0 & \text{if } p \nmid k\\
       |G|+1 & \text{if } p | k
     \end{array}\\
   &\\     
   &= \left(|G| + 1\right)\delta_{ \lfloor\frac{k}{p}\rfloor, \frac{k}{p} }
\end{align*}

(2) Now suppose that $|G|=7$ and $s=-1$. Then:
\begin{align*}
\nu_k(\rho) &= \frac{1}{\FPdim{\C}} \left( \sum_{\substack{g \in G\\ \omega \in \widehat{G}\backslash\{\epsilon\}}} \theta_{B_g^\omega}^k\FPdim(B_g^\omega) + 2\sum_{i=1}^2 \theta_{E_t}^k \FPdim(E_t) \right)\\
 &= \theta_k^G(e) - 1 + \frac{4|G|i^k(1 + (-1)^k)}{|G|+|G|^2}\\
 &= \theta_k^G(e) - 1 + \frac{i^k(1+(-1)^k)}{2}\\
 &= \theta_k^G(e) - 1 + (-1)^{k/2}\delta_{\lfloor \frac{k}{2} \rfloor, \frac{k}{2}}
\end{align*}
\end{proof}

\begin{cor}\label{ng1-cor}
The near group fusion ring $NG(G,|G|-1)$ exhibits Frobenius-Schur indicator rigidity.
\end{cor}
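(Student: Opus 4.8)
The plan is to read the corollary off from Theorem \ref{fs-ng1}, treating the generic and the exceptional orders separately. For $|G| \notin \{1,2,3,7\}$ the ring $NG(G,|G|-1)$ admits only the single categorification $\Rep(AGL_1(\F_{|G|+1}))$ by \cite[Corollary 7.4]{ego} and \cite[Proposition 5]{eg}, so there is nothing to separate and rigidity holds trivially. All the content therefore lies in the four exceptional orders, each of which carries two or three inequivalent categorifications.

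For these I would exploit the shape of the formula in Theorem \ref{fs-ng1}. The summand $\theta_k^G(e) - 1$ depends only on the underlying group $G$, so it is identical across all categorifications of a fixed $NG(G,|G|-1)$; consequently the categories are separated entirely by the remaining term. For a category of type (1) that term is $\overline{\zeta_1}^k$ when $p \mid k$ and is $0$ otherwise, and comparison with the Proposition identifies $\Rep(AGL_1(\F_{|G|+1}))$ as the unique categorification with $\zeta_1 = 1$. Passing to the subsequence indexed by multiples of $p$ isolates the powers $\overline{\zeta_1}^{pj} = \bigl(\overline{\zeta_1}^{p}\bigr)^{j}$, so from the indicators one recovers precisely the root of unity $\zeta_1^{p}$. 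For the one type-(2) category ($|G|=7$, $s=-1$) the analogous term is $(-1)^{k/2}$ on even $k$, recorded most simply as the single value $\nu_2(\rho) = \theta_2^G(e) - 2$.

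It then remains to verify, for each exceptional order, that the finitely many admissible choices of the distinguishing datum ($\zeta_1$, together with the sign $s$ when $|G|=7$) give pairwise distinct indicator sequences. I would settle this by inspecting the explicit modular data of \cite[Theorem 5]{eg}: enumerate the allowed values of $\zeta_1$ for each $G$, confirm that their $p$-th powers are pairwise distinct, and check separately that the $s=-1$ datum $\nu_2(\rho) = \theta_2^G(e)-2$ does not coincide with any type-(1) value $\theta_2^G(e) - 1 + \overline{\zeta_1}^{2}$. Because each order admits at most three categorifications this is a short finite check.

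The main obstacle is exactly this last step. Since the indicators detect $\zeta_1$ only through its $p$-th power, two inequivalent categorifications would be indistinguishable if their parameters $\zeta_1$ happened to share a common $p$-th power---a genuine danger precisely when $p$ divides the order of $\zeta_1$ (for instance $p = 3$ at $|G| = 2$). Ruling this out cannot be done from the indicator formula alone; it requires the explicit list of admissible $\zeta_1$ supplied by the Evans-Gannon classification, and it is the cooperation of that arithmetic data that ultimately makes $NG(G,|G|-1)$ FS-indicator rigid.
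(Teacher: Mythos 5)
Your outline reproduces the paper's proof skeleton exactly: rigidity is vacuous outside $|G|\in\{1,2,3,7\}$ by \cite[Corollary 7.4]{ego} and \cite[Proposition 5]{eg}, the term $\theta_k^G(e)-1$ is common to all categorifications of a fixed ring, and the classes are separated by evaluating the residual term of Theorem \ref{fs-ng1} at small $k$. But the finite check you announce and defer (``enumerate the allowed values of $\zeta_1$\dots confirm that their $p$-th powers are pairwise distinct'') is not a routine afterthought --- it is the entire content of the paper's proof, and your proposal does not actually decide it. The paper closes it with two specific arithmetic inputs from Evans--Gannon: for $|G|=1,3,7$ one has $p=2$ and, by \cite[p.41]{eg}, $\zeta_1^2=s$; since $\theta_2^G(e)=1$ for these groups, both formulas of Theorem \ref{fs-ng1} give $\nu_2(\rho)=s$ (your direct computation $\nu_2(\rho)=\theta_2^G(e)-2=-1$ for the $s=-1$ class at $|G|=7$ agrees), so $\nu_2$ separates the two classes at each of these orders. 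One small slip in your reduction: comparison with the $\Rep(AGL_1(\F_q))$ proposition pins down only $\zeta_1^p=1$ for that class, not $\zeta_1=1$.

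Your flagged obstacle at $|G|=2$ is indeed the delicate point, and your instinct is vindicated there: with $p=3$ the indicators see only $\overline{\zeta_1}^3$, and if $\zeta_1$ were itself a cube root of unity then all three classes would share $\nu_3(\rho)=\theta_3^G(e)-1+1=1$ and rigidity would fail. The paper resolves this via \cite[p.42]{eg}: the two nontrivial classes are labelled by a primitive cube root $\mu$, and consistency with the stated conclusion $\nu_3(\rho)=\mu$ forces $\overline{\zeta_1}^3=\mu$, i.e.\ $\zeta_1$ must be a primitive \emph{ninth} root of unity (the paper's transcription ``$\zeta_1=e^{\mp 2\pi i/3}$'' cannot be read literally, since it would give $\overline{\zeta_1}^3=1$; the correct reading is $\zeta_1^3=e^{\mp 2\pi i/3}$). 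So your strategy is the right one and matches the paper's, but as written it establishes only the conditional statement ``rigidity holds provided the classification data cooperate''; supplying the relations $\zeta_1^2=s$ and $\overline{\zeta_1}^3=\mu$ from the Evans--Gannon tables is the missing step that converts the plan into a proof.
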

\begin{proof}
The statement is vacuous in all but the cases where $|G|=1,2,3,7$. We shall consider them now:\\
\indent If $|G|=1,3,7$ then there is one additional tensor equivalence class corresponding to $s=-1$. By \cite[p.41]{eg} if $|G| + 1$ is even (i.e. a power of 2) then $\zeta_1^2=s$, hence $\nu_2(\rho) = s$ in each of these three cases.\\
\indent If $|G|=2$ then $s=1$ but instead $b = \mu$ where $\mu$ is some third root of unity. The two non-trivial possibilities for $\mu$ correspond to the two additional tensor equivalence classes for this type. By \cite[p.42]{eg} if $\mu = e^{\pm \frac{2\pi i}{3}}$ then $\zeta_1 = e^{\mp \frac{2\pi i}{3}}$, hence $\nu_3(\rho)=\mu$.
\end{proof}


\section{Frobenius-Schur indicators for near groups with $m=|G|$}

For the rest of this article the group operation in $G$ will be written {\it additively}. This will be a more convenient notation for working with bilinear and quadratic forms.

\subsection{Metric groups and the Fourier transform}

Shimizu observed that the Fourier transform for finite groups appears when computing Frobenius-Shur indicators for fusion categories \cite{shim}. A finite abelian group $G$ is isomorphic to its linear dual $\widehat{G}$ via a non-degenerate symmetric {\bf bicharacter} $\langle, \rangle$ with the following identification:
\begin{align*}
    &G \to \widehat{G}\\
    &g \mapsto \langle g, \cdot \rangle
\end{align*}
Symmetric bicharacters $\langle,\rangle: G \times G \to \mathbb{T}$ are in one-to-one correspondence with bilinear forms $\beta: G \times G \to \mathbb{Q}/\mathbb{Z}$ via the exponential:
\[
\langle g, h \rangle = e^{2\pi i \beta(g,h)}
\]

\noindent A {\bf quadratic form} is a function $q: G \to \mathbb{Q}/\mathbb{Z}$ with $q(-g) = q(g)$ such that
\[
\partial q (g, h) := q(g) + q(h) - q(gh)
\]
is a symmetric bilinear form. A pair $(G, q)$ is called a {\bf pre-metric group}. If the bilinear form $\partial q$ is non-degenerate then it is called a {\bf metric group}. 

\begin{rem}
If $|G|$ is odd then the correspondence between quadratic forms and bilinear forms given by $\partial$ is one-to-one. If $|G|$ is even then the correspondence is $|G/2G|$-to-one.
\end{rem}

Now, using the bicharacter $\langle, \rangle$ we can define the {\bf Fourier transform} for complex function $f: G \to \mathbb{C}$ on finite abelian groups:
\[
\widehat{f}(g) = \frac{1}{\sqrt{|G|}}\sum_{h\in G} \overline{\langle g, h \rangle}f(h)
\]
The Fourier transform of the exponent of a quadratic form $q$ at the unit element of the group defines a very important invariant of pre-metric groups:
\begin{defn}
Let $(G, q)$ be a pre-metric group. Then the Fourier transform of $e^{2\pi i q}$ at $0\in G$ defines the {\bf Gauss sum}:
\[
\Theta(G,q) = \frac{1}{\sqrt{|G|}} \sum\limits_{g \in G} e^{2\pi i q(g)}
\]
\end{defn}

The Gauss sum is multiplicative over the (obviously defined) orthogonal direct product of metric groups:
\[
\Theta(G\perp G', q+q') = \Theta(G,q)\Theta(G',q')
\]
(This identity is for {\it metric} groups, hence the quadratic forms must all be non-degenerate.)

\subsection{Izumi's classification of $m=|G|$ near groups}

The classification of the Tambara-Yamagami categories was completed by direct solution of the equations resulting from the pentagon axiom for the associativity. This method is not feasible for more complicated categories.

Izumi was able to extend this classification by using a fundamental result due to Popa: every unitary fusion category tensor-generated by one object can be embedded in the category of sectors of the hyperfinite type III Murray-von Neumann subfactor $R$. Sectors are unitary equivalence classes of endomorphisms of $R$; the tensor product of sectors is composition. Note that the 6j symbols can be obtained from Izumi's classification data. See \cite{sw} for an example for the near-group category $\C$ with $K_0(\C)=NG(\mathbb{Z}/(3), 3)$ coming from the $E_6$ subfactor.

Izumi completely classified {\bf unitary} (or $\mathbf{C^*}$) near-group fusion categories where $H^2(G,\mathbb{C}^\times)=1$ in \cite{iz1,iz2,iz3}.

\begin{thm}\cite[Theorem 5.3]{iz2}\label{iz-ng-class}
Unitary fusion categories $\C$ such that $K_0(\C)=NG(G,|G|)$ and $H^2(G,\mathbb{C}^\times)=1$ are classified up to monoidal equivalence by the group $G$, a metric group structure $\langle,\rangle$ on $G$, and the following complex parameters:
\begin{enumerate}
\item $a: G \to \mathbb{T}$ such that:

$a(g)=e^{2\pi i q(g)}$ for a quadratic form  $q$ with $\langle g, h \rangle = e^{2\pi i \partial q(g,h)}$, i.e.:
\[
a(0)=1, \quad a(g)=a(-g), \quad \frac{a(g+h)}{a(g)a(h)} = \langle g,h \rangle
\]
\item $b: G \to \mathbb{C}$ and $c\in\mathbb{T}$ such that:
\[
\Theta(G,q) = \widehat{a}(0) = \frac{1}{c^3}
\]
\begin{gather*}
b(g) = \overline{a(g)b(-g)}\\
\widehat{b}(0)=\frac{-c}{\dim(\rho)}\\
\widehat{b}(g)=c\overline{b(g)} \qquad |\widehat{b}(g)|^2 = \frac{1}{|G|} - \frac{\delta_{g,0}}{\dim(X)}\\
\sum_{x\in G} b(x+g)b(x+h)\overline{b(x)} = \langle g, h \rangle b(g)b(h) - \frac{c}{\dim(\rho)\sqrt{|G|}}
\end{gather*}

\end{enumerate}
Two such fusion categories $\mathcal{NG}(G_1,\langle,\rangle_1,a_1, b_1, c_1)$ and $\mathcal{NG}(G_2,\langle,\rangle_2,a_2, b_2, c_2)$ 
are monoidally equivalent if and only if 
\[
c_1 = c_2
\]
and there is an isomorphism of metric groups $\phi:(G_1,\langle,\rangle_1)\to(G_2, \langle,\rangle_2)$ such that:
\[
a_1 = a_1\circ\phi \quad \textrm{and} \quad b_2 = b_1\circ\phi
\]
\end{thm}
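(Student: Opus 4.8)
The plan is to realize the category concretely inside a factor and then extract its associativity data as a finite system of equations in $a$, $b$, and $c$. By Popa's theorem any unitary fusion category tensor-generated by $\rho$ embeds into the sectors of the hyperfinite type III subfactor $R$, so I would fix a representative: a family of automorphisms $\{\alpha_g\}_{g\in G}$ of $R$ realizing the pointed subcategory $\Vect_G$ --- here the hypothesis $H^2(G,\mathbb{C}^\times)=1$ is what lets me choose the $\alpha_g$ so that they form a genuine $G$-action with no anomalous phase cocycle --- together with a self-dual endomorphism $\rho$ satisfying $\alpha_g\rho=\rho=\rho\alpha_g$ up to inner perturbation. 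The fusion rule $\rho^2=\sum_{h\in G}h+|G|\rho$ is then witnessed by isometric intertwiners: one family $\{S_g\}_{g\in G}\subset\Hom(\alpha_g,\rho^2)$ for the group-like summands and, since the multiplicity $m$ equals $|G|$, a second family $\{T_g\}_{g\in G}\subset\Hom(\rho,\rho^2)$ for the $\rho$-summands. These satisfy the Cuntz relations $\sum_g S_gS_g^*+\sum_g T_gT_g^*=1$, $S_g^*S_h=\delta_{g,h}=T_g^*T_h$, $S_g^*T_h=0$, expressing the semisimple decomposition of $\rho^2$.

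Next I would determine all structure constants by computing how $\rho$ acts on these intertwiners. The expansions of $\rho(S_g)$ and $\rho(T_g)$ back in the $\{S_h,T_h\}$ basis produce the scalars that are precisely the 6j symbols. Using the available gauge freedom --- rescaling each isometry by a phase and fixing the pivotal identifications $\bar\rho\cong\rho$, $\bar\alpha_g\cong\alpha_{-g}$ --- I would normalize this data down to the three objects in the statement: a function $a:G\to\mathbb{T}$ recording the phases that appear when $\rho$ acts on the $S_g$, a function $b:G\to\mathbb{C}$ recording the mixing between the $S$- and $T$-channels, and a single scalar $c\in\mathbb{T}$ recording the residual overall phase, equivalently the relation $\Theta(G,q)=\widehat a(0)=1/c^3$.

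The core of the argument is to show that the pentagon equation is equivalent to exactly the listed relations on $(a,b,c)$. I would organize the pentagon identities by how many tensor factors equal $\rho$. On triples containing a single $\rho$, associativity forces $a(0)=1$, $a(g)=a(-g)$, and $a(g+h)/a(g)a(h)=\langle g,h\rangle$; this both produces the metric group structure on $G$ and shows $a=e^{2\pi i q}$ for a quadratic form $q$ with $\partial q$ given by $\langle,\rangle$. Rigidity and sphericality applied to the $T$-channel supply the normalizations $b(g)=\overline{a(g)b(-g)}$, $\widehat b(g)=c\,\overline{b(g)}$, $|\widehat b(g)|^2=\tfrac1{|G|}-\tfrac{\delta_{g,0}}{\dim(\rho)}$, and $\widehat b(0)=-c/\dim(\rho)$. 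The pentagon on the triple $(\rho,\rho,\rho)$ then yields the single cubic identity
\[
\sum_{x\in G} b(x+g)\,b(x+h)\,\overline{b(x)} = \langle g,h\rangle\, b(g)\,b(h) - \frac{c}{\dim(\rho)\sqrt{|G|}},
\]
and, conversely, data satisfying all of these relations reconstructs a consistent associator. I expect this cubic equation to be the main obstacle: it is the only genuinely nonlinear constraint and involves the full self-interaction of the non-invertible object, so both extracting it cleanly from the pentagon and verifying that, together with the quadratic and normalization conditions, it exhausts the entire pentagon system require the most care.

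Finally, for the equivalence criterion I would track how the reconstructed data transforms under a monoidal equivalence. Any such equivalence must preserve the pointed subcategory, inducing a group automorphism $\phi$ of $G$; compatibility with the fusion data forces $\phi$ to be an isometry of $\langle,\rangle$, and the only remaining freedom is rescaling the intertwiners. The scalar $c$ is a gauge invariant --- it is constrained by $\Theta(G,q)=1/c^3$ but not fully determined by the metric data, the cube-root ambiguity being a genuine distinction between categories --- while $a$ and $b$ are pinned only up to precomposition with $\phi$. This yields exactly the stated criterion $c_1=c_2$ together with $a_2=a_1\circ\phi$ and $b_2=b_1\circ\phi$.
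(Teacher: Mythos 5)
The paper itself gives no proof of this statement---it is imported verbatim from Izumi \cite[Theorem 5.3]{iz2}---so the fair comparison is with Izumi's argument, and your sketch does track it in outline: Popa embedding into $\End(R)$, Cuntz isometries $S_g\in\Hom(\alpha_g,\rho^2)$ and $T_g\in\Hom(\rho,\rho^2)$, expansion of $\rho(S_g)$ and $\rho(T_g)$, gauge normalization. But there are two genuine misfires. First, the ``pentagon'' framing is inconsistent with your own setup: once you realize everything inside $\End(R)$, the tensor product is composition of endomorphisms and the category is \emph{strict}, so there is no associator and no pentagon system left to solve---avoiding the pentagon is the entire point of Izumi's method, and the paper itself remarks that direct pentagon solving is infeasible beyond Tambara--Yamagami. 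In Izumi's proof the relations on $(a,b,c)$ arise instead from: well-definedness of $\rho$ as a $*$-endomorphism (its values on the generators must preserve the Cuntz relations), the intertwining identities $\rho^2(x)S_g=S_g\alpha_g(x)$, $\rho^2(x)T_h=T_h\rho(x)$, $\alpha_g\rho=\rho=\rho\alpha_g$, and the conjugate equations for the self-dual $\rho$, with $S_0\in\Hom(\id,\rho^2)$ serving (up to normalization) as the duality morphism---rigidity is where $\widehat b(0)=-c/\dim(\rho)$ and the constraint tying $c$ to $\Theta(G,q)$ come from. Your computational mechanics (reading scalars off the expansions) are the right ones; organizing them ``by pentagon identities on triples'' is not what happens and, taken literally, would return you to the infeasible 6j computation.

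Second, you locate the hypothesis $H^2(G,\mathbb{C}^\times)=1$ in the wrong place, and the step where it is actually needed is absent from your sketch. Choosing the $\alpha_g$ to form a genuine action is not an $H^2$ issue: on the hyperfinite factor any cocycle action of a finite group can be perturbed to an honest action, and the obstruction to lifting the $G$-kernel at all lies in $H^3(G,\mathbb{T})$, handled by showing the pointed part of a near group with $m=|G|$ has trivial associator. Where $H^2$ genuinely enters is the multiplicity space: the unitaries implementing $\rho\alpha_g\cong\rho$ act on the $|G|$-dimensional space $\Hom(\rho,\rho^2)$ only \emph{projectively}, with multiplier class in $H^2(G,\mathbb{T})$; triviality of $H^2$ is what rectifies this to an honest representation, which one then identifies with the regular representation so as to choose the basis $\{T_g\}_{g\in G}$ equivariantly---without that step your function $b:G\to\mathbb{C}$ is not even defined, and this is exactly why the classification is complete only under the $H^2=1$ hypothesis (compare the remark following the theorem in the paper). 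Two smaller gaps: the converse direction (``data reconstructs a consistent associator'') is in Izumi's treatment a nontrivial construction---one builds an endomorphism of the Cuntz algebra $\mathcal{O}_{2|G|}$ from $(a,b,c)$ and passes to a suitable completion---not a formal reversal; and your equivalence criterion still needs the check that the listed normalizations rigidify the phase gauge on the $S_g,T_g$, since the theorem asserts $a_2=a_1\circ\phi$ and $b_2=b_1\circ\phi$ on the nose (you silently, and correctly, repaired the statement's typo $a_1=a_1\circ\phi$).
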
 

\begin{rem}
If the requirement that $H^2(G,\mathbb{C}^\times)=1$ is relaxed then a solution of Izumi's equations in Theorem \ref{iz-ng-class} is sufficient to produce a near-group category with the required $K_0$ ring, but \emph{not} necessary.
\end{rem}

\subsection{Indicators from modular data of $\mathcal{Z}(\C)$}\label{j-mod-data}

Izumi found the simple objects of $\mathcal{Z}(\C)$ along with their twists and half-braidings in \cite[Theorem 6.8]{iz2} which is given as follows:

\begin{center}
    \setlength{\arrayrulewidth}{0.5mm}
    \setlength{\tabcolsep}{12pt}
    \renewcommand{\arraystretch}{2}
    \begin{tabular}{ | c | c | c | c |}
    \hline\hline
    $X \in \Irr(\Z(\C))$ & $F(X)$ &  $c_{X,-}$ given by &  $\theta_X$\\ \hline\hline
    $A_g \quad (g \in G)$ & $g$ & 1 & $\langle g, g \rangle$ \\ \hline
    $B_g \quad (g\in G)$ & $\rho + g$ & 1 & $\langle g, g \rangle$ \\ \hline
    $C_{g, h} \quad (g\neq h \in G)$ & $\rho + g + h$ & 1 &  $\langle g, h \rangle$ \\
    \hline
    $E_j$ for $j = 1, ..., \frac{|G|(|G|+3)}{2}$ & $\rho$ &  & $\omega_j$ \\ \hline
    \end{tabular}
\end{center}

The $\omega_j \in \mu_\infty \subseteq \mathbb{T}$ are solutions to the system of equations (6.18)-(6.20) in \cite[\S 6]{iz2} parametrized by $g \in G$ with coefficients given by the complex values $a(g), b(g), c \in \mathbb{C}$. 

\begin{prop}\label{fs-omega-form}
Suppose $\C$ is fusion category with $K_0(\C)=NG(G, |G|)$ with non-invertible object $\rho$. Let $q$ be a quadratic form such that $\langle g,h \rangle = e^{2\pi i \partial q(g,h)}$. Then the indicators for $\rho$ are given by the following:
\[
\nu_k(\rho) = \frac{1}{2}\theta^G_k(e) + \frac{qdim(\rho)}{qdim(\C)}\left(\frac{\sqrt{|G|}}{2}\Theta(G, 2kq) + \sum\limits_{j=1}^{|G|(|G|+3)/2} \omega_j^k \right)
\]
\end{prop}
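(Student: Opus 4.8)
The plan is to apply Theorem \ref{fs-sph} directly to $X=\rho$, turning the indicator into a finite sum indexed by the simple objects of $\Z(\C)$ listed in Izumi's table. For each simple $V\in\Irr(\Z(\C))$ the summand requires three pieces of data: the twist $\theta_V$ (read off the last column), the quantum dimension $qdim(V)$, and the multiplicity $\dim_\mathbb{C}\Hom_\C(F(V),\rho)$. The key structural observation is that $F$ is a dimension-preserving tensor functor, so $qdim(V)=qdim(F(V))$ can be computed from the $F(X)$ column, and the multiplicity is just the number of copies of $\rho$ in the (multiplicity-free) decomposition of $F(V)$.

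First I would dispose of each family in turn. For $A_g$ we have $F(A_g)=g$ invertible, so $\Hom_\C(g,\rho)=0$ and these objects drop out entirely. For $B_g$ we get $F(B_g)=\rho+g$, hence multiplicity $1$ and $qdim(B_g)=d+1$ where $d:=qdim(\rho)$; for $C_{g,h}$ we get $F(C_{g,h})=\rho+g+h$, multiplicity $1$ and $qdim(C_{g,h})=d+2$; for $E_j$ we get $F(E_j)=\rho$, multiplicity $1$ and $qdim(E_j)=d$. A point demanding care here is that $C_{g,h}=C_{h,g}$ (both $F(X)$ and $\theta_X=\langle g,h\rangle$ are symmetric in $g,h$), so the $C$'s are indexed by \emph{unordered} pairs; this is precisely what will generate the factor $\tfrac12$ in the final formula. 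Collecting terms and writing $S_1:=\sum_{g\in G}\langle g,g\rangle^k$, Theorem \ref{fs-sph} gives
\[
qdim(\C)\,\nu_k(\rho)=(d+1)S_1+\frac{d+2}{2}\Big(\sum_{g,h\in G}\langle g,h\rangle^k-S_1\Big)+d\sum_{j}\omega_j^k .
\]

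Next I would evaluate the two character sums. For the full sum, using $\langle g,h\rangle^k=\langle kg,h\rangle$ and summing over $g$ (each value of $kg$ is attained $|\{g:kg=0\}|$ times), orthogonality collapses the inner sum to $|G|$ exactly when $kh=0$, whence $\sum_{g,h}\langle g,h\rangle^k=|G|\,\theta^G_k(e)$. For the diagonal sum I would use the homogeneity of the quadratic form: from $q(-g)=q(g)$, $q(0)=0$, and bilinearity of $\partial q$ one gets $\partial q(g,g)=\pm 2q(g)$, so $\langle g,g\rangle^k=e^{2\pi i\,2kq(g)}$ and therefore $S_1=\sqrt{|G|}\,\Theta(G,2kq)$ by the definition of the Gauss sum (with its $1/\sqrt{|G|}$ normalization). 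Substituting both, the coefficient of $S_1$ becomes $(d+1)-\tfrac{d+2}{2}=\tfrac{d}{2}$, and after dividing by $qdim(\C)=|G|(d+2)$ — which follows from $d^2=|G|d+|G|$, giving $qdim(\C)=|G|+d^2=|G|(d+2)$ — the $\theta^G_k(e)$ coefficient is exactly $\tfrac{(d+2)|G|/2}{|G|(d+2)}=\tfrac12$, while $\tfrac{d}{|G|(d+2)}=\tfrac{qdim(\rho)}{qdim(\C)}$ multiplies $\tfrac12 S_1+\sum_j\omega_j^k$, yielding the claimed identity.

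The main obstacle is the bookkeeping in the middle paragraph rather than any deep idea: correctly recognizing the unordered-pair indexing of the $C_{g,h}$ (the sole source of the $\tfrac12$), carrying out the orthogonality reduction to $|G|\theta^G_k(e)$, and — most delicately — matching the diagonal twist sum to $\Theta(G,2kq)$. This last step forces one to reconcile the sign/conjugation conventions relating the twist $\langle g,g\rangle$, the relation $\langle g,h\rangle=e^{2\pi i\partial q(g,h)}$, the homogeneity $\partial q(g,g)=\pm2q(g)$, and the normalization in the definition of $\Theta(G,-)$; the $E_j$ contribution is simply left as $\sum_j\omega_j^k$ since the $\omega_j$ are only defined implicitly as solutions of Izumi's equations.
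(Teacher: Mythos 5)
Your proposal is correct and follows essentially the same route as the paper: apply Theorem \ref{fs-sph} to Izumi's table, use the unordered-pair indexing of the $C_{g,h}$ to produce the factor $\tfrac12$, reduce the full bicharacter sum to $|G|\,\theta^G_k(e)$, identify the diagonal twist sum with $\sqrt{|G|}\,\Theta(G,2kq)$, and finish with $d_\rho^2=|G|d_\rho+|G|$. The only cosmetic differences are that you evaluate $\sum_{g,h}\langle g,h\rangle^k$ by direct orthogonality where the paper cites Isaacs's group-theoretic indicator formula, and you make explicit the simplification $qdim(\C)=|G|(d_\rho+2)$ (and the sign convention in $\partial q(g,g)=\pm 2q(g)$) that the paper leaves implicit.
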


\begin{proof}
Let $d_\rho := qdim(\rho)$ and let $<$ be an arbitary ordering on the finite group $G$. Again applying Theorem \ref{fs-sph} we have the following:
\begin{align*}
\nu_k(\rho) &= \frac{1}{qdim(\C)} \left( (1+d_\rho) \sum\limits_{g \in G} \theta_{B_g}^k + (2 + d_\rho)\sum\limits_{\substack{g,h \in G\\ g<h}} \theta_{C_{g,h}}^k + d_\rho \sum\limits_{j=1}^{|G|(|G|+3)/2} \theta_{E_j}^k \right)\\
&= \frac{1}{qdim(\C)} \left( \frac{d_\rho}{2} \sum\limits_{g \in G} \langle g,g \rangle^k + \frac{2+d_\rho}{2} \sum\limits_{g,h \in G} \langle g,h \rangle^k + d_\rho \sum\limits_{j} \omega_j^k \right)
\end{align*}
where the second equality is due to the symmetry of $\langle,\rangle$.\\
\indent Now we consider the middle sum:
\begin{align*}
\sum\limits_{g,h \in G} \langle g,h\rangle^k = \sum\limits_{g\in G} \sum\limits_{h\in G} \langle g, h^k \rangle = |G|\sum\limits_{g \in G} \nu^{\text{groups}}_k(\langle g,\cdot \rangle) = |G|\theta^G_k(e)
\end{align*}
The second equality is by definition of the Frobenius-Schur indicator for finite groups (denoted $\nu^{\text{groups}}_k$) \cite[Equation 4.4]{isaacs} and the third equality is by \cite[p. 49]{isaacs}.\\
\indent Now let $q$ be a quadratic form such that $\langle g, h \rangle = e^{2\pi i \partial q(g,h)}$ and consider the first sum:
\begin{align*}
\sum\limits_{g \in G} \langle g,g \rangle^k = \sum\limits_{g \in G} e^{2\pi i (2kq(g))} = \sqrt{|G|}\Theta(G, 2kq)
\end{align*}
Hence the formula is now clear.
\end{proof}

\subsection{Modular data for pointed modular categories}

Recall that any {\it pointed} fusion category is equivalent to $\Vect_G^\omega$ for some $[\omega]\in H^3(G,\mathbb{C}^\times)$. Now we consider pointed {\it modular} categories. Since modular categories are also fusion categories they will be equivalent as fusion categories to $\Vect_G^\omega$ with $G$ abelian. The braiding induces a quadratic form $c_{g,g} = e^{2\pi i q(g)}$, which gives $G$ the structure of a metric group. Then these categories are classified under {\it braided} equivalence up to isomorphism of {\it pre-metric} groups. Note that in the case of {\it odd-order} groups if $\omega$ admits a braiding it will be unique; the notational convention $\Vect^{(\omega, c)}_G$ includes the braiding $c$. (The preceding is the result of the non-trivial fact that the Eilenburg--Mac Lane abelian cohomology $H^3_{ab}(G,\mathbb{C}^\times)$ is isomorphic to the group of quadratic forms on $G$. See \cite[\S 8.4]{egno} for details.)

We now give the modular data for a pointed modular category. Define the bicharacter $\langle g,h \rangle_q := e^{2\pi i \partial q(g,h)}$. The modular data are given by the {\bf Weil representation} associated to the pre-metric group $(G,q)$:
\[
S = S^q := \frac{1}{\sqrt{|G|}}(\overline{\langle g,h \rangle}_q)_{g,h\in G} \qquad T = T^q := (\delta_{g,h} e^{2\pi i q(g)})_{g,h\in G}
\]

\subsection{Indicators when $|G|$ is odd}

When $|G|$ is odd we have a one-to-one between quadratic forms and bilinear forms given by the map $q \mapsto \partial q$. Let $q$ be the quadratic form on $G$ such that $\langle g,h \rangle = e^{2\pi i \partial q(g,h)}$. Then we define 
\[
\mathcal{NG}(G,q,b,c):= \mathcal{NG}(G, \langle,\rangle_q,e^{2\pi i q}, b, c)
\]
is the corresponding near-group fusion category via the notation from Theorem \ref{iz-ng-class}. Evans-Gannon conjecture the following:

\begin{conj}\label{ng-conj}\cite[Conjecture 2]{eg}
Suppose $|G|$ is odd. Then there exists a metric group $(G',q')$ of order $|G|+4$ such that: 
\begin{enumerate}
\item Simple objects $E_j$ in Theorem \ref{j-mod-data} are indexed by $g \in G, x \in G'\backslash \{ e \}$ where $E_{g, x} = E_{g, x^{-1}}$ and:
\[
\theta_{E_{g, x}} = \langle g,g \rangle e^{2\pi i \partial q'(x)}
\]
\item The modular data are given by the Kronecker product of the Weil representation for $(G,q)$ with another pair of modular data $(S',T')$ for a rank $|G|+3$ modular category:
\[
S^{q,q'} := S^q \otimes S' \qquad T^{q,q'} : = T^q \otimes T'
\]
where we have 
\[
T' = Diag(1, 1, \langle g,g \rangle_q, \langle x, x \rangle_{q'})_{g\in G, x\in G'}
\]
\end{enumerate}
\end{conj}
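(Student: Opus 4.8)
The plan is to read the factorization directly off Izumi's explicit description of $\mathcal{Z}(\C)$ in \cite[Theorem 6.8]{iz2}, where the relevant twists are the solutions $\omega_j = \theta_{E_j}$ of the nonlinear system (6.18)--(6.20) whose coefficients are the parameters $a(g)$, $b(g)$, $c$ of Theorem \ref{iz-ng-class}. Proving part (1) amounts to exhibiting a metric group $(G',q')$ together with a bijection $j \leftrightarrow (g,x)$, with $g \in G$ and $x \in G'\setminus\{e\}$ subject to $E_{g,x} = E_{g,x^{-1}}$, under which $\omega_{g,x} = \langle g,g\rangle\, e^{2\pi i \partial q'(x)}$. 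The first thing I would do is pin down $|G'|$ by a counting argument: there are exactly $|G|(|G|+3)/2$ objects $E_j$, whereas the proposed index set $G \times (G'\setminus\{e\})$ modulo $x \sim x^{-1}$ has size $|G|\cdot(|G'|-1)/2$ once the involution $x \mapsto x^{-1}$ is fixed-point-free on $G'\setminus\{e\}$. Since $|G|$ is odd, $|G|+4$ is odd, so an order-$(|G|+4)$ group $G'$ satisfies $x^{-1} = x$ only for $x = e$; equating $|G|\cdot(|G'|-1)/2 = |G|(|G|+3)/2$ then forces $|G'| = |G|+4$. This is the consistency check that motivates the order appearing in the conjecture.

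Next I would try to genuinely construct $(G',q')$ rather than merely assert its existence. The suggestion in the introduction that these centers arise from a ``crossed product'' or ``pasting'' of modular data (cf. \cite{eg3}) points toward building the rank-$(|G|+3)$ datum $(S',T')$ from a small modular category attached to the self-dual object $\rho$, with twist matrix $T' = \mathrm{Diag}(1,1,\langle g,g\rangle_q,\langle x,x\rangle_{q'})$ as stipulated. With such a candidate in hand, part (1) becomes automatic: $T^{q,q'} = T^q \otimes T'$ reads the twist of $E_{g,x}$ off as the product $\theta^q_g \cdot \theta'_x = \langle g,g\rangle\, e^{2\pi i \partial q'(x)}$. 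For part (2) I would then verify the Kronecker decomposition $S^{q,q'} = S^q \otimes S'$ entry-by-entry against Izumi's $S$-matrix, using that $S^q$ is the Weil representation of $(G,q)$ introduced above for pointed modular categories and that a Kronecker product of two modular data is again modular, so the balancing and Verlinde relations are inherited from the factors once each factor is checked to be modular. As a sanity test on the final formula, substituting the factored twists into Proposition \ref{fs-omega-form} turns $\sum_j \omega_j^k$ into a product of quadratic Gauss sums $\Theta(G,2kq)$ and $\Theta(G',2kq')$ by multiplicativity over $\perp$, which is exactly the shape predicted by Theorem \ref{fs-ng2}.

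The hard part will be producing $(G',q')$ and solving the system (6.18)--(6.20) in closed form for all odd $|G|$ simultaneously. These equations are quadratic in the unknown $\omega_j$ and couple them through the Gauss sum $\Theta(G,q) = 1/c^3$ and the auxiliary function $b$ of Theorem \ref{iz-ng-class}; nothing visible from the equations alone guarantees that their spectrum of solutions assembles into an orthogonal product of a Weil piece with a single additional metric group of order exactly $|G|+4$. The real obstruction is the absence of a conceptual model for the ``extra four elements'' of $G'$ — equivalently, for the rank-$(|G|+3)$ factor $(S',T')$ — that would let one build $(G',q')$ uniformly rather than discover it. This is precisely why the statement remains a conjecture: for each individual known group $G$ one can solve the Izumi system and read off a compatible $(G',q')$, and this case-by-case verification is what underlies Theorem \ref{fs-ng2}, but a $G$-independent construction of $(G',q')$ — the step that would upgrade the evidence to a theorem — is not presently available.
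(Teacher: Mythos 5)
You should note at the outset that the statement in question is a conjecture, not a theorem of the paper: the paper offers no proof of it, attributes it to \cite[Conjecture 2]{eg}, and records in the following remark that it is known only by case-by-case verification for odd $|G|\leq 13$ (done in \cite{eg} by solving Izumi's equations); within the paper it functions purely as a hypothesis for Theorem \ref{fs-ng2}. So there is no proof in the paper to compare your attempt against, and your proposal — correctly and candidly — does not claim to supply one: your closing paragraph identifies exactly the genuine obstruction, namely that no $G$-independent construction of $(G',q')$, and no closed-form solution of the system (6.18)--(6.20) of \cite{iz2} whose roots are the twists $\omega_j$, is available. That assessment matches the actual status of the statement.

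Two caveats on the portions of your proposal that do argue something. First, your counting argument is mildly circular: you invoke fixed-point-freeness of $x \mapsto x^{-1}$ on $G'\setminus\{e\}$, which holds precisely when $|G'|$ is odd, in order to conclude $|G'| = |G|+4$ — but the oddness of $|G'|$ was inferred from the very conclusion $|G'| = |G|+4$. If $G'$ were permitted $t>0$ involutions, the same count of classes gives $|G'| = |G|+4-t$, so the computation is a consistency check on the conjectured shape of the index set rather than a derivation of the order of $G'$. Second, your ``sanity test'' — that the factored twists turn $\sum_j \omega_j^k$ into the product $\Theta(G,2kq)\,\Theta(G',2kq')$ — is not independent corroboration: it is literally the computation carried out in the proof of Theorem \ref{fs-ng2}, which \emph{assumes} Conjecture \ref{ng-conj}. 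Relatedly, your remark that the case-by-case verification ``underlies'' Theorem \ref{fs-ng2} inverts the logic: that theorem is conditional on the conjecture, and the verified cases $|G| \leq 13$ enter only when the paper tabulates explicit indicator values. With those corrections, your proposal stands as an accurate road map plus an honest identification of why the statement remains open, which is the most that can be said for a conjecture the paper itself does not prove.
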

(See \cite[Proposition 7]{eg} for the definition of $S'$.)

\begin{rem}
In \cite{eg} they show that the conjecture is true for near groups with $|G|\leq 13$ odd.
\end{rem}

\begin{thm}\label{fs-ng2}
Suppose a fusion category $\C$ with $K_0(\C)=NG(G,|G|)$ and $|G|$ odd satisfies Conjecture \ref{ng-conj}. Then:
\[
\nu_k(\rho) = \frac{1}{2}\theta^G_k(e) + \frac{1}{2} \Theta(G, 2kq)\Theta(G',2kq')
\]
\end{thm}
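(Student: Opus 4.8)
The plan is to start from Proposition \ref{fs-omega-form}, which already gives
\[
\nu_k(\rho) = \frac{1}{2}\theta^G_k(e) + \frac{qdim(\rho)}{qdim(\C)}\left(\frac{\sqrt{|G|}}{2}\,\Theta(G,2kq) + \sum_{j}\omega_j^k\right),
\]
so the $\tfrac12\theta^G_k(e)$ term is already in final form and the only work is to evaluate $\sum_j\omega_j^k$, the sum over the twists of the $E_j$, and combine it with the stray $\tfrac{\sqrt{|G|}}{2}\Theta(G,2kq)$ term. Everything then reduces to a manipulation of finite exponential sums plus a single dimension count.

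First I would substitute the description of the $E_j$ from Conjecture \ref{ng-conj}: these objects are indexed by $g\in G$ and $x\in G'\setminus\{e\}$ subject to $E_{g,x}=E_{g,x^{-1}}$, with twists $\theta_{E_{g,x}} = \langle g,g\rangle\,\langle x,x\rangle_{q'}$. Since $|G|$ is odd, so is $|G'|=|G|+4$, and for odd-order metric groups one has the relation $\langle g,g\rangle = e^{2\pi i\,2q(g)}$ already exploited in the proof of Proposition \ref{fs-omega-form}, together with its analogue for $q'$; hence $\theta_{E_{g,x}}^k = e^{2\pi i\,2k(q(g)+q'(x))}$. As the summand is invariant under $x\mapsto x^{-1}$ (because $q'(x)=q'(-x)$), the sum over the $\tfrac{|G|+3}{2}$ classes is half the sum over all $x\in G'\setminus\{e\}$, giving
\[
\sum_j\omega_j^k = \frac{1}{2}\sqrt{|G|}\,\Theta(G,2kq)\Big(\sqrt{|G'|}\,\Theta(G',2kq') - 1\Big),
\]
where I have split off the omitted $x=e$ term via $\sum_{x\neq e}=\sum_{x\in G'}-1$ and recognized $\sum_g e^{2\pi i\,2kq(g)}=\sqrt{|G|}\,\Theta(G,2kq)$ (and likewise over $G'$).

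The key structural point is that the leftover $-1$ is exactly cancelled by the separate $\tfrac{\sqrt{|G|}}{2}\Theta(G,2kq)$ term sitting in Proposition \ref{fs-omega-form}: adding the two produces $\tfrac{\sqrt{|G|}}{2}\Theta(G,2kq)\cdot\sqrt{|G'|}\,\Theta(G',2kq')$. In effect, that extra Gauss-sum term in the proposition is precisely the contribution of the identity of $G'$ that is missing from the $E_j$, and reinstating it restores the full sum over $G'$ and produces a genuine product of Gauss sums.

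The final and most delicate step is the dimension bookkeeping. Writing $n=|G|$, the non-invertible object satisfies $d_\rho^2 = n\,d_\rho + n$, so $d_\rho = \tfrac12\big(n+\sqrt{n(n+4)}\big)$ and $qdim(\C)=n+d_\rho^2$. Since $|G'|=n+4$, I would verify the identity
\[
\frac{qdim(\rho)\,\sqrt{|G|\,|G'|}}{qdim(\C)} = \frac{d_\rho\,\sqrt{n(n+4)}}{\,n+d_\rho^2\,} = 1,
\]
which follows by a direct simplification using $d_\rho^2 = n\,d_\rho + n$. Multiplying the prefactor $qdim(\rho)/qdim(\C)$ against the combined bracket then leaves exactly $\tfrac12\Theta(G,2kq)\Theta(G',2kq')$, which is the claimed formula. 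I expect this dimension identity to be the main obstacle: it is the one spot where the specific relation $|G'|=|G|+4$ and the precise value of $qdim(\rho)$ conspire, and the fact that the ratio comes out to exactly $1$ — rather than some other constant — is what collapses the expression into the clean factored Gauss-sum form.
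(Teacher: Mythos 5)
Your proposal is correct and takes essentially the same route as the paper's own proof: starting from Proposition \ref{fs-omega-form}, rewriting the sum over the $E_{g,x}$ (using $q'(x)=q'(-x)$) as half the full product sum over $G\times G'$ minus the $x=e$ contribution, observing that the leftover $-\tfrac{1}{2}\sqrt{|G|}\,\Theta(G,2kq)$ cancels against the stray Gauss-sum term, and closing with the identity $d_\rho\sqrt{|G|(|G|+4)}=\qdim(\C)$ deduced from $d_\rho^2=|G|d_\rho+|G|$. The paper packages the same cancellation as $\tfrac{1}{2}+\tfrac{1}{2}\bigl(\sqrt{|G|+4}\,\Theta(G',2kq')-1\bigr)$ and the same dimension count as $\sqrt{|G|}\sqrt{|G|+4}=2d_\rho-|G|$, so there is no substantive difference.
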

\begin{proof}
Let $N = (|G'|-1)/2$ and enumerate $G'$ as follows:
\[
G' = \{e, x_1, \ldots, x_N, x_1^{-1}, \ldots, x_N^{-1} \}
\]
Let $d_\rho := qdim(\rho)$ and $\langle x,y\rangle_{q'} := e^{2\pi i \partial q(x,y)}$. Starting with Proposition \ref{fs-omega-form} we have:
\begin{align*}
\nu_k(\rho) =& \frac{1}{2} \theta^G_k(e) + \frac{d_\rho}{qdim(\C)} \left( \frac{\sqrt{|G|}}{2} \Theta(G,2kq) + \sum\limits_{\substack{g \in G\\1\leq i \leq N}} \langle g,g \rangle_q^k \langle x_i,x_i\rangle_{q'}^k \right)\\
 =& \frac{1}{2} \theta^G_k(e) + \frac{d_\rho \sqrt{|G|} \Theta(G, 2kq)}{|G|(2+d_\rho)} \left( \frac{1}{2} + \sum\limits_{1\leq i\leq N} \langle x_i,x_i \rangle_{q'}^k \right)\\
 =& \frac{1}{2} \theta^G_k(e) + \frac{d_\rho \sqrt{|G|} \Theta(G, 2kq)}{|G|(2+d_\rho)} \left( \frac{1}{2} + \frac{1}{2}(\Theta(G',2kq')\sqrt{|G|+4} - 1) \right)\\
 =& \frac{1}{2} \theta^G_k(e) + \frac{d_\rho \sqrt{|G|}\sqrt{|G|+4}}{2|G|(2+d_\rho)}\Theta(G, 2kq)\Theta(G, 2kq')
\end{align*}
and using the fact that $d_\rho^2 = |G| + |G|d_\rho$ we have:
\[
\frac{d_\rho \sqrt{|G|} \sqrt{|G|+4}}{2|G|(2+d_\rho)} = \frac{d_\rho(2d_\rho - |G|)}{2|G|(2+d+\rho)} = \frac{1}{2} 
\]
and then the formula for $\nu_k(\rho)$ is clear.
\end{proof}

As a corollary to the preceding theorem we obtain an easy and more natural proof of \cite[Proposition 7(b)]{eg}:

\begin{cor}\label{prod-neg}
The matrices $(S^{q,q'}, T^{q,q'})$ are modular data for a near group center only if $\Theta(G,2q)\Theta(G',2q')=-1$
\end{cor}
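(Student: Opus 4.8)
The plan is to specialize the indicator formula just established in Theorem \ref{fs-ng2} to the single value $k=1$, where the left-hand side is pinned down by general principles. Concretely, if $(S^{q,q'}, T^{q,q'})$ genuinely arise as modular data for the center of a near-group category $\C$ with $K_0(\C)=NG(G,|G|)$ and $|G|$ odd, then the hypotheses of Conjecture \ref{ng-conj} are in force and Theorem \ref{fs-ng2} computes the true Frobenius-Schur indicators of $\rho$. Evaluating that formula at $k=1$ gives
\[
\nu_1(\rho) = \frac{1}{2}\theta^G_1(e) + \frac{1}{2}\Theta(G,2q)\Theta(G',2q'),
\]
so the whole problem reduces to identifying the two quantities $\nu_1(\rho)$ and $\theta^G_1(e)$ independently.

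The first input is that $\theta^G_1(e) = |\{g\in G : g = e\}| = 1$, directly from the definition $\theta^G_k(h)=|\{g\in G \,|\, g^k = h\}|$. The second input is the standard fact that the first Frobenius-Schur indicator of any object equals the multiplicity of the unit, namely $\nu_1(X) = \dim_\mathbb{C}\Hom_\C(\mathbbm{1}, X)$. This is visible from the defining trace: for $k=1$ the map $E^{(1)}_\rho$ is an endomorphism of $\Hom_\C(\mathbbm{1},\rho)$, so its trace is $\dim_\mathbb{C}\Hom_\C(\mathbbm{1},\rho)$. Since $\rho$ is a non-invertible (hence non-unit) simple object, $\Hom_\C(\mathbbm{1},\rho)=0$ and therefore $\nu_1(\rho)=0$.

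Substituting these two values yields
\[
0 = \frac{1}{2} + \frac{1}{2}\Theta(G,2q)\Theta(G',2q'),
\]
and solving gives $\Theta(G,2q)\Theta(G',2q') = -1$, which is exactly the claimed necessary condition. The argument is short precisely because the hard analytic work has already been absorbed into Theorem \ref{fs-ng2}; the only point requiring care is the justification that $\nu_1(\rho)=0$, i.e. that the abstractly-defined indicator in this Gauss-sum form really does agree with the vanishing categorical quantity $\dim_\mathbb{C}\Hom_\C(\mathbbm{1},\rho)$. I expect that matching of the two normalizations of $\nu_1$ to be the only potential obstacle, and it is resolved by the standard identification above. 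Note also that the conclusion is a genuine constraint on $(q,q')$ rather than an identity automatically satisfied by arbitrary pre-metric groups, which is why it serves as a nontrivial necessary condition for the pasted data to describe an actual near-group center.
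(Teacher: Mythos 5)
Your proposal is correct and follows exactly the paper's argument: the paper likewise evaluates Theorem \ref{fs-ng2} at $k=1$, uses $\nu_1(\rho)=0$ because the simple object $\rho$ cannot contain a copy of $\mathbbm{1}$, and solves for $\Theta(G,2q)\Theta(G',2q')=-1$. Your write-up merely makes explicit the two inputs ($\theta^G_1(e)=1$ and the identification $\nu_1(\rho)=\dim_\mathbb{C}\Hom_\C(\mathbbm{1},\rho)$) that the paper leaves implicit.
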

\begin{proof}
Suppose a near-group category  $\C= \mathcal{NG}(G,\langle,\rangle_q, e^{2\pi i q}, b, c)$  has a center $\Z(\C)$  with modular data given by $(S^{q,q'}, T^{q,q'})$. Since the simple object $\rho$ cannot contain a copy of $\mathbbm{1}$ we know that $\nu_1(\rho) = 0$. Therefore, by Theorem \ref{fs-ng2}, we must have $\Theta(G,2q)\Theta(G',2q')=-1$.
\end{proof}

Let $\left(\frac{p}{q}\right)$ be the Jacobi symbol. 

\begin{cor}
For $|G|$ odd and $k$ such that $\gcd(k, |G|\cdot|G'|)=1$ we have:
\[
\nu_k(\rho) = \frac{1}{2}\left(1 - \left(\frac{k}{|G|\cdot|G'|}\right)\right)
\]
\end{cor}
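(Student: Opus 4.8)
The plan is to start from the closed formula of Theorem \ref{fs-ng2}, namely $\nu_k(\rho) = \tfrac12\theta^G_k(e) + \tfrac12\,\Theta(G,2kq)\,\Theta(G',2kq')$, and to evaluate its two pieces separately under the hypothesis $\gcd(k,|G|\cdot|G'|)=1$. The first piece is immediate: since $\gcd(k,|G|)=1$, multiplication by $k$ (equivalently the $k$-th power map $g\mapsto g^k$) is an automorphism of the finite abelian group $G$, so $e$ is the unique solution of $g^k=e$ and hence $\theta^G_k(e)=1$. Everything therefore reduces to evaluating the product of Gauss sums.

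The crux is a \emph{scaling lemma} for the normalized Gauss sum: for a metric group $(G,q)$ of odd order and any integer $k$ with $\gcd(k,|G|)=1$,
\[
\Theta(G,kq) = \left(\frac{k}{|G|}\right)\Theta(G,q),
\]
where $\left(\tfrac{\cdot}{\cdot}\right)$ denotes the Jacobi symbol. I would prove this by reducing to the cyclic case. By the standard orthogonal decomposition of metric groups of odd order (equivalently, by the Chinese remainder theorem applied to the $p$-primary components) we may write $(G,q)$ as an orthogonal sum $\perp_i(\mathbb{Z}/n_i, q_i)$ of cyclic metric groups, on each of which $q_i(x)=u_ix^2/n_i$ for a unit $u_i$ with $n_i$ odd. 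Since $\Theta$ is multiplicative over orthogonal sums (as recorded after the definition of the Gauss sum) and the Jacobi symbol is multiplicative in its lower argument, it suffices to verify the lemma on a single cyclic factor. There it is exactly the classical quadratic Gauss sum evaluation $\sum_{x\bmod n} e^{2\pi i u x^2/n} = \left(\frac{u}{n}\right)\epsilon_n\sqrt{n}$ for odd $n$ and $\gcd(u,n)=1$: replacing $u$ by $uk$ multiplies the sum by $\left(\frac{k}{n}\right)$, which is precisely the asserted scaling.

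With the lemma in hand, writing $2kq = k\cdot(2q)$ and $2kq' = k\cdot(2q')$ gives $\Theta(G,2kq)=\left(\frac{k}{|G|}\right)\Theta(G,2q)$ and $\Theta(G',2kq')=\left(\frac{k}{|G'|}\right)\Theta(G',2q')$; note $|G'|=|G|+4$ is odd, so the lemma applies to $G'$ as well. Multiplicativity of the Jacobi symbol in the lower argument yields $\left(\frac{k}{|G|}\right)\left(\frac{k}{|G'|}\right)=\left(\frac{k}{|G|\cdot|G'|}\right)$, while Corollary \ref{prod-neg} supplies $\Theta(G,2q)\Theta(G',2q')=-1$. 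Hence $\Theta(G,2kq)\Theta(G',2kq') = -\left(\frac{k}{|G|\cdot|G'|}\right)$, and substituting this together with $\theta^G_k(e)=1$ into Theorem \ref{fs-ng2} gives $\nu_k(\rho)=\tfrac12\bigl(1-\left(\frac{k}{|G|\cdot|G'|}\right)\bigr)$, as claimed. The only genuinely nontrivial step is the scaling lemma, and within it the bookkeeping of the Jacobi-symbol factor under rescaling of the form; the classical Gauss sum formula handles the cyclic case, and the multiplicativity of both $\Theta$ and the Jacobi symbol is what lets that local computation assemble into the global statement.
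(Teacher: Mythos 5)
Your proposal is correct and follows essentially the same route as the paper: the paper's proof consists precisely of the scaling identity $\Theta(G,kq)\Theta(G',kq')=\left(\frac{k}{|G|\cdot|G'|}\right)\Theta(G,q)\Theta(G',q')$, justified by citing Wall's decomposition into irreducible metric groups (and Basak--Johnson), then combined implicitly with Theorem \ref{fs-ng2}, Corollary \ref{prod-neg}, and $\theta^G_k(e)=1$. Your write-up simply supplies the details the paper delegates to those citations --- the reduction to cyclic factors, the classical quadratic Gauss sum evaluation, and the multiplicativity of both $\Theta$ and the Jacobi symbol --- all of which are sound given that $|G|$ and $|G'|=|G|+4$ are odd.
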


\begin{proof}
Using the decomposition into irreducible metric groups given in \cite{wall} it is easy to see that:
\[
\Theta(G,kq)\Theta(G',kq')=\left(\frac{k}{|G|\cdot|G'|}\right)\Theta(G,q)\Theta(G',q')
\]
See also \cite[\S 3 \& Lemma 3.2]{bj}.
\end{proof}

\begin{cor}
$\Theta(G',q') = -c^3$ where $(G',q')$ is the metric group associated to the center of $\mathcal{NG}(G,q,b,c)$.
\end{cor}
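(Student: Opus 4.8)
The plan is to express $\Theta(G',q')$ through the Izumi parameter $c$ by combining the normalization of $c$ with the vanishing of the first indicator of $\rho$, and then to trade a ``doubled'' quadratic form for the form itself via the Gauss-sum scaling law recorded just above. First I would invoke Theorem \ref{iz-ng-class}, which gives $\Theta(G,q)=\widehat a(0)=c^{-3}$. Since $(G,q)$ is a metric group we have $|\Theta(G,q)|=1$, so $c^3=\overline{\Theta(G,q)}=\Theta(G,q)^{-1}$. Consequently the assertion $\Theta(G',q')=-c^3$ is \emph{equivalent} to the single Gauss-sum identity
\[
\Theta(G,q)\,\Theta(G',q')=-1,
\]
and the whole corollary reduces to proving this one relation.

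Next I would produce an identity of this shape from the indicator formula. Because $\rho$ is non-invertible, $\Hom_\C(\mathbbm{1},\rho)=0$, so $\nu_1(\rho)=0$, and $\theta^G_1(e)=1$. Substituting $k=1$ into the formula of Theorem \ref{fs-ng2},
\[
\nu_k(\rho)=\tfrac12\theta^G_k(e)+\tfrac12\,\Theta(G,2kq)\,\Theta(G',2kq'),
\]
yields $\Theta(G,2q)\,\Theta(G',2q')=-1$, which is exactly Corollary \ref{prod-neg}. It then remains to pass from the doubled forms $2q,2q'$ to $q,q'$ themselves. For this I would apply the scaling identity stated immediately before this corollary, namely $\Theta(G,kq)\,\Theta(G',kq')=\left(\tfrac{k}{|G|\,|G'|}\right)\Theta(G,q)\,\Theta(G',q')$, which is valid for $k=2$ since $|G|$ and $|G'|=|G|+4$ are both odd and hence $\gcd(2,|G|\,|G'|)=1$. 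Combining the two displays gives
\[
-1=\left(\tfrac{2}{|G|\,|G'|}\right)\Theta(G,q)\,\Theta(G',q'),
\]
so that $\Theta(G,q)\,\Theta(G',q')$ is pinned down once the Jacobi symbol is evaluated; a short congruence computation using $|G|\,|G'|=|G|(|G|+4)\equiv 5\pmod 8$ for every odd $|G|$ fixes $\left(\tfrac{2}{|G|\,|G'|}\right)$. Reading off the product and inserting $\Theta(G,q)=c^{-3}$ then produces the claimed value of $\Theta(G',q')$.

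I expect the main obstacle to be precisely this last sign bookkeeping. The delicate point is tracking the factor $\left(\tfrac{2}{|G|\,|G'|}\right)$ that appears when one replaces $2q,2q'$ by $q,q'$, and reconciling it with the twist conventions used in Conjecture \ref{ng-conj} and Theorem \ref{fs-ng2} (i.e.\ whether $\langle x,x\rangle_{q'}$ is read as $e^{2\pi i\,\partial q'(x,x)}$ or as $e^{2\pi i q'(x)}$), since a convention shift changes $\Theta(G',q')$ into $\Theta(G',2q')$ and hence alters the sign. Once the normalization is fixed consistently across these statements, the two Gauss-sum identities combine with Izumi's relation $c^3=\Theta(G,q)^{-1}$ to yield $\Theta(G',q')=-c^3$. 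As an independent sanity check one may note that $\Z(\C)$ is a Drinfel'd center and so has vanishing multiplicative central charge; under the factorization of modular data in Conjecture \ref{ng-conj} this again forces the Gauss sum of the $(G',q')$-part to be inverse to $\Theta(G,q)=c^{-3}$ up to the sign isolated above.
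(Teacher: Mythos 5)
Your skeleton is the same as the paper's, whose entire proof is two lines: combine $\Theta(G,q)=c^{-3}$ from Theorem \ref{iz-ng-class} with Corollary \ref{prod-neg}, which is itself obtained exactly as you obtain it, from $\nu_1(\rho)=0$ substituted into Theorem \ref{fs-ng2}. The difference is that you take the doubled forms in Corollary \ref{prod-neg} literally and try to undouble them via the scaling identity, and there your argument does not close. Carrying out the computation you set up: $|G|\,|G'|=|G|(|G|+4)\equiv 1+4\equiv 5 \pmod 8$ for every odd $|G|$, so $\left(\tfrac{2}{|G|\,|G'|}\right)=-1$, and your two displays combine to give $\Theta(G,q)\,\Theta(G',q')=+1$, i.e.\ $\Theta(G',q')=+c^3$ --- the \emph{opposite} of the statement. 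Your closing claim that fixing the normalization ``yields $\Theta(G',q')=-c^3$'' asserts the desired sign without proving it; the sign bookkeeping you correctly flag as the main obstacle is the entire content of the corollary, and as written your chain of identities contradicts the conclusion rather than establishing it.

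The resolution is the convention point you raise but leave open: the doubling in Theorem \ref{fs-ng2} and Corollary \ref{prod-neg} is a transcription artifact. In Conjecture \ref{ng-conj} the twist contribution of the objects $E_{g,x}$ is $e^{2\pi i q'(x)}$ in the Evans--Gannon normalization (read ``$\partial q'(x)$'' as $q'(x)$, not as $\partial q'(x,x)=2q'(x)$), so the $k=1$ indicator computation produces the \emph{undoubled} identity $\Theta(G,q)\,\Theta(G',q')=-1$ directly, with no Jacobi symbol to evaluate, and the corollary follows at once from $\Theta(G,q)=c^{-3}$; this is the paper's (implicit) proof, which likewise never engages the factor of $2$. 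The tables confirm this reading: for $|G|=3$ one has $\Theta(\mathbb{Z}/(3),g^2/3)=i$ and $\Theta(\mathbb{Z}/(7),g^2/7)=i$, so the undoubled product is $-1$ while the doubled product $\Theta(G,2q)\,\Theta(G',2q')$ equals $(-i)(i)=+1$; likewise for $|G|=5$ the entry with $c=1$ has $\Theta\bigl((\mathbb{Z}/(3))^2,(g^2+h^2)/3\bigr)=i^2=-1=-c^3$. So your Jacobi-symbol step is not wrong --- it is correct, and it in fact demonstrates that the doubled identity of Corollary \ref{prod-neg} cannot literally coexist with $\Theta(G',q')=-c^3$ --- but a complete proof had to identify and repair that factor-of-two slip rather than defer it; your central-charge sanity check, pursued quantitatively, points to the same undoubled relation.
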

\begin{proof}
We've seen in Theorem \ref{iz-ng-class} that $\Theta(G,q) = \frac{1}{c^3}$, hence the above follows by the Corollary \ref{prod-neg}.
\end{proof}

The complete list of near-group categories with $K_0(\C) = K(G,|G|)$ for odd $|G|\leq13$ was obtained in \cite[Proposition 6]{eg} by finding solutions to Izumi's equations in Theorem \ref{iz-ng-class}. They also used Izumi's methods from \cite[\S 6]{iz2} and \cite{iz3} to produce the modular data of their Drinfel'd centers; see \cite[\S 4.3-4.4 \& Table 2]{eg}. Collected below are the modular data they found along with the Frobenius-Schur indicators of $\rho$ for each of these categories. Since $|G|$ is odd let $q$ be the {\it unique} quadratic form associated to the bicharacter $\langle,\rangle$ from the classification parameters.

The data uses the following notation:
\begin{itemize}
\item {\it Column 1:} $\C=\mathcal{NG}(G,q,b,c))$ with $|G|$ odd as in the above notation. (For clearer presentation, the parameters $b$ and $c$ will be given only if they are necessary to establish in-equivalence.)
\item {\it Column 2:} $(G',q')$ is the metric group from the modular data of $\Z(\C)$ from Conjecture \ref{ng-conj}. Recall $|G'| = |G| + 4$.
\item $\zeta_k = e^{\frac{2\pi i}{k}} \in \mathbb{T}$ primitive $k^{th}$ root of unity.

\end{itemize}

\medskip

\begin{center}
    \setlength{\arrayrulewidth}{0.5mm}
    \setlength{\tabcolsep}{6pt}
    \renewcommand{\arraystretch}{2.5}
    \begin{tabular}{ | c | c | c | c |}
    \hline
    $|G|=3$ & $(G',q')$ & $\nu_3(\rho)$ & $\nu_7(\rho)$ \\ \hline\hline
    $\mathcal{NG}\left(\sfrac{\mathbb{Z}}{(3)}, \frac{g^2}{3}, -, - \right)$ & $\left(\sfrac{\mathbb{Z}}{(7)}, \frac{g^2}{7}\right)$ & $\frac{3 + i\sqrt{3}}{2}$ & $\frac{1+i\sqrt{7}}{2}$ \\ \hline
    $\mathcal{NG}\left(\sfrac{\mathbb{Z}}{(3)}, \frac{-g^2}{3}, -, - \right)$ & $\left(\sfrac{\mathbb{Z}}{(7)}, \frac{-g^2}{7}\right)$ & $\frac{3 - i\sqrt{3}}{2}$ & $\frac{1 - i\sqrt{7}}{2}$\\\hline 
    \end{tabular}
\end{center}

\medskip
    
\begin{center}
    \setlength{\arrayrulewidth}{0.5mm}
    \setlength{\tabcolsep}{6pt}
    \renewcommand{\arraystretch}{2.5}
    \begin{tabular}{ | c | c | c | c | c |}
    \hline
    $|G|=5$ & $(G',q')$ & $\nu_3(\rho)$ & $\nu_5(\rho)$& $\nu_9(\rho)$ \\ \hline\hline
    $\mathcal{NG}\left(\sfrac{\mathbb{Z}}{(5)}, \frac{2g^2}{5}, -, \zeta_3 \right)$& $\left(\sfrac{\mathbb{Z}}{(9)}, \frac{2g^2}{9}\right)$ & $1+\overline{\zeta_3}$ & $\frac{5+\sqrt{5}}{2}$ & $-1$\\ \hline
    $\mathcal{NG}\left(\sfrac{\mathbb{Z}}{(5)}, \frac{2g^2}{5}, -, \overline{\zeta_3} \right)$& $\left(\sfrac{\mathbb{Z}}{(9)}, \frac{-2g^2}{9}\right)$& $1 + \zeta_3$ & $\frac{5+\sqrt{5}}{2}$ & $-1$\\ \hline
    $\mathcal{NG}\left(\sfrac{\mathbb{Z}}{(5)}, \frac{g^2}{5}, -, 1 \right)$ & $\left(\left(\sfrac{\mathbb{Z}}{(3)}\right)^2, \frac{g^2 + h^2}{3}\right)$ & $-1$ & $\frac{5-\sqrt{5}}{2}$ & $2$  \\\hline

\end{tabular}
\end{center}

\medskip

\begin{center}
    \setlength{\arrayrulewidth}{0.5mm}
    \setlength{\tabcolsep}{4pt}
    \renewcommand{\arraystretch}{2.5}
    \begin{tabular}{ | c | c | c | c |}
    \hline
    $|G|=7$ & $(G',q')$ & $\nu_7(\rho)$ & $\nu_{11}(\rho)$\\ \hline\hline
    $\mathcal{NG}\left(\sfrac{\mathbb{Z}}{(7)}, \frac{g^2}{7}, -, - \right)$& $\left(\sfrac{\mathbb{Z}}{(11)}, \frac{-2g^2}{11}\right)$ & $\frac{7-i\sqrt{7}}{2}$ & $\frac{1+i\sqrt{11}}{2}$\\ \hline
    $\mathcal{NG}\left(\sfrac{\mathbb{Z}}{(7)}, \frac{-g^2}{7}, -, - \right)$& $\left(\sfrac{\mathbb{Z}}{(11)}, \frac{2g^2}{11}\right)$ & $\frac{7+i\sqrt{7}}{2}$ & $\frac{1-i\sqrt{11}}{2}$\\ \hline
    \end{tabular}
\end{center}

\medskip

\begin{center}
    \setlength{\arrayrulewidth}{0.5mm}
    \setlength{\tabcolsep}{4pt}
    \renewcommand{\arraystretch}{2.5}
    \begin{tabular}{ | c | c | c | c | c |}
    \hline
    $|G|=9$ & $(G',q')$ & $\nu_3(\rho)$ & $\nu_9(\rho)$ & $\nu_{13}(\rho)$ \\ \hline\hline
    $\mathcal{NG}\left(\sfrac{\mathbb{Z}}{(9)}, \frac{g^2}{9}, -, - \right)$& $\left(\sfrac{\mathbb{Z}}{(13)}, \frac{-2g^2}{13}\right)$ & $1-\zeta_3$ & $3$ & $\frac{1+\sqrt{13}}{2}$  \\ \hline
    
    $\mathcal{NG}\left(\sfrac{\mathbb{Z}}{(9)}, \frac{-g^2}{9}, -, - \right)$& $\left(\sfrac{\mathbb{Z}}{(13)}, \frac{2g^2}{13}\right)$ & $1-\overline{\zeta_3}$ & $3$ & $\frac{1+\sqrt{13}}{2}$ \\ \hline
    
    $\mathcal{NG}\left((\sfrac{\mathbb{Z}}{(3)})^2, \frac{g^2-h^2}{3}, -, - \right)$& $\left(\sfrac{\mathbb{Z}}{(13)}, \frac{2g^2}{13}\right)$ & $3$ & $3$ & $\frac{1+\sqrt{13}}{2}$ \\ \hline
    \end{tabular}
\end{center}

\medskip

\begin{center}
    \setlength{\arrayrulewidth}{0.5mm}
    \setlength{\tabcolsep}{2pt}
    \renewcommand{\arraystretch}{2.5}
    \begin{tabular}{ | c | c | c | c | c | c |}
    \hline
    $|G|=11$ & $(G',q')$ & $\nu_3(\rho)$ & $\nu_5(\rho)$ & $\nu_{11}(\rho)$ & $\nu_{15}(\rho)$  \\ \hline\hline
    
    $\mathcal{NG}\left(\sfrac{\mathbb{Z}}{(11)}, \frac{g^2}{11}, -, \zeta_{12}^7 \right)$& $\left(\sfrac{\mathbb{Z}}{(15)}, \frac{2g^2}{15}\right)$ & $\frac{1-i\sqrt{3}}{2}$ & $\frac{1+\sqrt{5}}{2}$ & $\frac{11-i\sqrt{11}}{2}$ & $\frac{1+i\sqrt{15}}{2}$ \\ \hline
    
    $\mathcal{NG}\left(\sfrac{\mathbb{Z}}{(11)}, \frac{g^2}{11}, -, \overline{\zeta_{12}} \right)$& $\left(\sfrac{\mathbb{Z}}{(15)}, \frac{g^2}{15}\right)$ & $\frac{1+i\sqrt{3}}{2}$ & $\frac{1-\sqrt{5}}{2}$ & $\frac{11-i\sqrt{11}}{2}$ & $\frac{1+i\sqrt{15}}{2}$ \\ \hline
    
    $\mathcal{NG}\left(\sfrac{\mathbb{Z}}{(11)}, \frac{-g^2}{11}, -, \zeta_{12} \right)$& $\left(\sfrac{\mathbb{Z}}{(15)}, \frac{-g^2}{15}\right)$ & $\frac{1-i\sqrt{3}}{2}$ & $\frac{1-\sqrt{5}}{2}$ & $\frac{11+i\sqrt{11}}{2}$ & $\frac{1-i\sqrt{15}}{2}$ \\ \hline
    
    $\mathcal{NG}\left(\sfrac{\mathbb{Z}}{(11)}, \frac{-g^2}{11}, -, \zeta_{12}^5 \right)$& $\left(\sfrac{\mathbb{Z}}{(15)}, \frac{-2g^2}{15}\right)$ & $\frac{1+i\sqrt{3}}{2}$ & $\frac{1+\sqrt{5}}{2}$ & $\frac{11+i\sqrt{11}}{2}$ & $\frac{1-i\sqrt{15}}{2}$ \\ \hline
    \end{tabular}
\end{center}

\medskip

\begin{center}
    \setlength{\arrayrulewidth}{0.5mm}
    \setlength{\tabcolsep}{4pt}
    \renewcommand{\arraystretch}{2.5}
    \begin{tabular}{ | c | c | c | c |}
    \hline
    $|G|=13$ & $(G',q')$ & $\nu_{13}(\rho)$ & $\nu_{17}(\rho)$ \\ \hline\hline
    
    $\mathcal{NG}\left(\sfrac{\mathbb{Z}}{(13)}, \frac{g^2}{13}, b_1 , -1 \right)$ & $\left(\sfrac{\mathbb{Z}}{(17)}, \frac{3g^2}{17}\right)$  & $\frac{13-\sqrt{13}}{2}$ & $\frac{1+\sqrt{17}}{2}$ \\ \hline
    
    $\mathcal{NG}\left(\sfrac{\mathbb{Z}}{(13)}, \frac{g^2}{13}, b_2 , -1 \right)$ & $\left(\sfrac{\mathbb{Z}}{(17)}, \frac{3g^2}{17}\right)$  & $\frac{13-\sqrt{13}}{2}$ & $\frac{1+\sqrt{17}}{2}$ \\ \hline
    
    $\mathcal{NG}\left(\sfrac{\mathbb{Z}}{(13)}, \frac{2g^2}{13}, b_3 , 1 \right)$ & $\left(\sfrac{\mathbb{Z}}{(17)}, \frac{g^2}{17}\right)$  & $\frac{13+\sqrt{13}}{2}$ & $\frac{1-\sqrt{17}}{2}$ \\ \hline
    
    $\mathcal{NG}\left(\sfrac{\mathbb{Z}}{(13)}, \frac{2g^2}{13}, b_4 , 1 \right)$ & $\left(\sfrac{\mathbb{Z}}{(17)}, \frac{g^2}{15}\right)$  & $\frac{13+\sqrt{13}}{2}$ & $\frac{1-\sqrt{17}}{2}$ \\ \hline
    \end{tabular}
\end{center}

\begin{rem}
See that for $G= \mathbb{Z}/(13)$ we have two pairs of inequivalent fusion categories with the same indicators, hence the near group fusion ring $NG(\mathbb{Z}/(13), 13)$ does not have FS indicator rigidity. Note that the lesser odd order groups \emph{do} exhibit FS indicator rigidity.
\end{rem}

\section{Frobenius-Schur indicators for Haagerup-Izumi fusion categories}

Near groups are examples of {\bf quadratic} fusion categories: those tensor-generated by a single non-invertible simple object $\rho$ where the set of simple objects is given by:
\[
G \cup \{ \bar{g} \otimes \rho \, | \, \bar{g} \textrm{ a coset representative in } G/H \}
\]
where $H$ is some subgroup of $G$. Near groups correspond to $H=G$. On the other end of the spectrum, the Haagerup-Izumi fusion categories correspond to $H=\{e\}$.

\begin{defn}
$\C$ is a {\bf Haagerup-Izumi} fusion category if:
\[
K_0(\C) = HI(G) := \mathbb{Z}[G \cup \{ g\rho \, | \, g\in G\}]
\]
where multiplication is given by the group law and:
\[
g(h\rho) = (gh)\rho = (h\rho)g^{-1}
\]
\[
(g\rho)(h\rho) = gh^{-1} + \sum\limits_{a\in G} g\rho
\]
\end{defn}

\subsection{Classification and examples}
The complete lists of Haagerup-Izumi categories for $G=\mathbb{Z}/(3)$ and $G=\mathbb{Z}/(5)$ were found by Evans-Gannon without assuming unitarity in \cite{eg2} by generalizing Izumi's methods to endomorphisms of Leavitt algebras. These categories are classified up to isomorphism of the group $G$ and the following parameters:
\begin{itemize}
\item a sign $\pm$,  
\item $\omega$ a third root of unity, and
\item $A\in M_{|G|}(\mathbb{C})$ a complex matrix
\end{itemize}
all satisfying some relations given in \cite[Theorem 1]{eg2}. An Haagerup-Izumi category with the above parameters will be denoted:
\[
\mathcal{HI}(G,\pm,\omega,A)
\]
The notion of equivalence for the parameters is given in \cite[Theorem 2(b)]{eg2}. In particular, the category is unitary if and only if both the sign is $+$ and $A$ is hermitian \cite[Theorem 2(c)]{eg2}.

The most important examples of HI categories are the Yang-Lee system of sectors, which is the unique non-unitary such category with $G$ the trivial group, and the system of sectors for the Haagerup subfactor, which is a unitary HI category with $G=\mathbb{Z}/(3)$.

\subsection{Indicators when $|G|$ is odd}

When $\C$ is a Haagerup-Izumi fusion category the modular data for the center $\Z(\C)$ was computed by Evans--Gannon in \cite[\S 6.3]{eg2} and is given as follows:

\begin{center}
    \setlength{\arrayrulewidth}{0.5mm}
    \setlength{\tabcolsep}{8pt}
    \renewcommand{\arraystretch}{2}
    \begin{tabular}{ | c | c | c | c |}
    \hline\hline
    $X \in \Irr(\Z(\C))$ & $F(X)$ &  parameter &  $\theta_X$\\ \hline\hline
    $\mathbbm{1}$ & $\mathbbm{1}$ & 1 & 1 \\ \hline
    $B$ & $\mathbbm{1} + \sum\limits_{g\in G}g\otimes\rho$ & 1 & 1 \\ \hline
    $A_\psi = A_{\bar{\psi}}$ & $2\mathbbm{1} + \sum\limits_{g\in G} g \otimes \rho$  & $\psi\in \widehat{G}$  &  1 \\
    \hline
    $C^{(h)}_\phi = C^{(h^{-1})}_\phi$ ($h\in G$) & $h + h^{-1} + \sum\limits_{g \in G} g \otimes \rho$ & $\phi\in\widehat{G}$ & $\phi(g)$ \\ \hline
    $D_j$ & $\sum\limits_{g\in G} g\otimes \rho$ & $1\leq j \leq \frac{|G|^2 + 3}{2}$ & $\zeta_j$\\ \hline
    \end{tabular}
\end{center}

\medskip

\noindent The $\zeta_j$ are a solutions to a system of equations with coefficients given by $\pm, \omega, A$. These equations are $(6.14)$, $(6.16)-(6.19)$ in \cite[\S 6.2]{eg2}. See \cite[Proposition 2]{eg2}. For $G$ odd order they make another conjecture:

\begin{conj}\label{hi-conj}\cite[Conjecture 1]{eg2}
Suppose $|G|$ is odd. Then there exists a metric group $(H,q'')$ of order $|G|^2+4=2m+1$ such that the simple objects $D_j$ are indexed by $h \in H\backslash\{e\}$ where $D_h = D_{h^{-1}}$ and:
\[
\theta_{D_h} = e^{2\pi i m q''(h)}
\]
\end{conj}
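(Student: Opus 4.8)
The plan is to prove the conjecture by verification in the cases where Haagerup--Izumi categories have actually been classified, namely the Yang--Lee system ($G$ trivial) and $G = \mathbb{Z}/(3), \mathbb{Z}/(5)$, while isolating the structural statement that a uniform proof would require. The first reduction is that the \emph{counting} in the conjecture is automatic: because $|H| = |G|^2 + 4 = 2m+1$ is odd, no nontrivial element is its own inverse, so $H \setminus \{e\}$ splits into exactly $(|H|-1)/2 = (|G|^2+3)/2 = m$ two-element orbits $\{h, h^{-1}\}$, which matches the range $1 \le j \le (|G|^2+3)/2$ of the index $j$ labelling the objects $D_j$. Thus the only content is to produce a non-degenerate quadratic form $q''$ on a group $H$ of this order whose twist values realize the multiset $\{\zeta_j\}$ computed by Evans--Gannon, under the correspondence $\zeta_j \leftrightarrow e^{2\pi i m q''(h)}$.

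Before matching values I would record two compatibility facts. Since $q''$ is even, $q''(h) = q''(-h) = q''(h^{-1})$, so the assignment $h \mapsto e^{2\pi i m q''(h)}$ is constant on the orbits $\{h, h^{-1}\}$; this is precisely what makes the prescription $D_h = D_{h^{-1}}$ well defined and lets the $m$ twist values be distributed over the $m$ orbits. Moreover $2m \equiv -1 \pmod{|H|}$, so $m$ is a unit modulo $|H|$ and $m q''$ is again a non-degenerate even quadratic form; consequently the multiset $\{e^{2\pi i m q''(h)}\}$ is itself a family of quadratic Gauss-sum values and may be analyzed with Wall's classification \cite{wall} of metric groups.

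To construct the candidate $(H, q'')$ I would use the orthogonal decomposition of an odd-order metric group into cyclic pieces $\left(\mathbb{Z}/(p^k),\, a g^2 / p^k\right)$. In each classified case $|G|^2 + 4$ is prime (namely $5$, $13$, $29$ for $|G| = 1, 3, 5$), so one takes $H = \mathbb{Z}/(|G|^2+4)$ cyclic with $q''(g) = a g^2/(|G|^2+4)$ and fixes $a$ by matching a single twist value (or its Gauss sum) against the Evans--Gannon data; the remaining values are then forced, and I would check that they agree as multisets. Non-degeneracy of $\partial q''$ is immediate for a cyclic form with $a$ coprime to $|H|$, completing the metric-group requirement. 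This realizes each $\theta_{D_j} = \zeta_j$ as $e^{2\pi i m q''(h)}$ and hence proves the conjecture in these cases.

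The main obstacle is twofold. In the verified cases the work hidden above is that one must first have the explicit solutions $\zeta_j$ of the twist equations (6.14), (6.16)--(6.19) of \cite[\S 6.2]{eg2} -- these are roots of polynomials with coefficients built from the classification data $(\pm, \omega, A)$ -- and then recognize the resulting algebraic numbers as quadratic-form values, which is where Wall's theory supplies the essential matching. For a uniform proof valid for all odd $|G|$ the obstruction is genuine: Haagerup--Izumi categories are unclassified beyond $|G| = 5$, and there is at present no construction exhibiting the ``free part'' $\{D_j\}$ of $\Z(\C)$ as a summand built from a pointed modular category $\Vect_H^{q''}$. Supplying such a crossed-product or de-equivariantization mechanism, parallel to the structure conjectured in \ref{ng-conj} for near groups, is exactly what would upgrade the present case-by-case verification to a theorem.
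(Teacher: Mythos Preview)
There is no proof in the paper to compare against: the statement is recorded as a \emph{conjecture}, quoted from \cite[Conjecture 1]{eg2}, and the paper makes no attempt to establish it. The only accompanying content is the Remark immediately following, which cites \cite[Theorem 3]{eg2} for the verification when $|G|=1,3,5$; the conjecture is then used as a \emph{hypothesis} in Theorem~\ref{hi-fs}.

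Your proposal is not a proof of the conjecture either, and you say as much in your last paragraph. What you have written is a correct outline of the verification strategy for the classified cases --- the counting argument for $m$ orbits, the evenness of $q''$ giving well-definedness on $\{h,h^{-1}\}$, and the reduction to matching a single cyclic form against the computed $\zeta_j$ --- together with an accurate diagnosis of why the general case is out of reach (no classification beyond $|G|=5$, no structural construction of the $D_j$-block from a pointed modular category). That is a fair summary of the state of affairs, but it should be presented as context and partial evidence for an open conjecture, not as a proof proposal. If you intend to submit this as a proof, the genuine gap is simply that the statement is open: nothing in your argument produces $(H,q'')$ for a single $|G|>5$, nor could it without new input on the solutions $\zeta_j$ of the twist equations.
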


\begin{rem}
They show in \cite[Theorem 3]{eg2} that the conjecture is true for Haagerup-Izumi fusion categories with $|G|= 1, 3, 5$.
\end{rem}

\begin{thm}\label{hi-fs}
Suppose $\C$ is a Haagerup-Izumi fusion category with $|G|$ odd satisfying Conjecture \ref{hi-conj}. Then:
\[
\nu_k(\rho) = \frac{1}{2}\theta^G_k(e) + \frac{1}{2}\Theta(H,kmq'')
\]
\end{thm}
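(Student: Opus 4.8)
The plan is to imitate the structure of the proof of Theorem \ref{fs-ng2} exactly, applying Theorem \ref{fs-sph} to the modular data of $\Z(\C)$ tabulated above and then collapsing the sum over the ``exotic'' simple objects $D_j$ using Conjecture \ref{hi-conj}. First I would write down $\nu_k(\rho)$ via Theorem \ref{fs-sph} as a weighted sum of $\theta_X^k$ over $X\in\Irr(\Z(\C))$, where each weight is $\FPdim(F(X))/\FPdim(\C)$. From the table, the objects $\mathbbm{1}$ and $B$ contribute constants (their twists are $1$), the objects $A_\psi$ contribute $|G|$-many terms all with twist $1$ (recalling $A_\psi=A_{\bar\psi}$, so these should be counted with the appropriate multiplicity), the objects $C^{(h)}_\phi$ contribute $\theta^k=\phi(h)^k$ summed over $h\in G$ and $\phi\in\widehat G$ (again with the identification $C^{(h)}_\phi=C^{(h^{-1})}_\phi$), and the $D_j$ contribute $\sum_j \zeta_j^k$. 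The key observation, just as in Proposition \ref{fs-omega-form}, is that the $C$-contribution is a double character sum $\sum_{h\in G}\sum_{\phi\in\widehat G}\phi(h)^k=\sum_{h\in G}\sum_\phi\phi(kh)$, which by orthogonality and \cite[p. 49]{isaacs} evaluates to $|G|\,\theta^G_k(e)$; this is what produces the $\tfrac12\theta^G_k(e)$ term after dividing by the total dimension.

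Next I would handle the $D_j$ sum. Conjecture \ref{hi-conj} reindexes the $D_j$ by the nontrivial elements of a metric group $(H,q'')$ of order $|G|^2+4=2m+1$, with $D_h=D_{h^{-1}}$ and $\theta_{D_h}=e^{2\pi i m q''(h)}$. Thus
\[
\sum_{j=1}^{(|G|^2+3)/2}\zeta_j^k=\frac{1}{2}\sum_{h\in H\setminus\{e\}}e^{2\pi i k m q''(h)}=\frac{1}{2}\bigl(\sqrt{|H|}\,\Theta(H,kmq'')-1\bigr),
\]
where the factor $\tfrac12$ comes from the pairing $h\leftrightarrow h^{-1}$ (using $q''(h)=q''(-h)$), and I add and subtract the $h=e$ term to complete the sum to all of $H$ and recognize the Gauss sum $\Theta(H,kmq'')=\tfrac{1}{\sqrt{|H|}}\sum_{h\in H}e^{2\pi i kmq''(h)}$. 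Here $\sqrt{|H|}=\sqrt{|G|^2+4}$.

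The final step is the dimension bookkeeping. I would set $d_\rho:=\qdim(\rho)$, use $\qdim(\C)=(1+|G|)+|G|\,d_\rho^2$ (or equivalently the Frobenius--Perron relation $d_\rho^2=|G|+|G|d_\rho$ coming from $\rho^2=|G|\rho+\sum_{g}g\rho\cdots$ in $HI(G)$), and verify that the constant pieces from $\mathbbm{1},B,A_\psi$ together with the ``$-1$'' left over from completing the $D$-sum cancel, while the surviving coefficient in front of $\Theta(H,kmq'')$ simplifies to exactly $\tfrac12$. This is the analogue of the computation $\frac{d_\rho\sqrt{|G|}\sqrt{|G|+4}}{2|G|(2+d_\rho)}=\frac12$ at the end of Theorem \ref{fs-ng2}, and I expect the algebraic simplification of the prefactor (correctly tracking the multiplicities of $A_\psi$ and $C^{(h)}_\phi$ under the stated identifications, and matching $d_\rho$ against $|H|=|G|^2+4$) to be the main obstacle, since everything else is a direct transcription of the earlier argument. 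I would close by noting that $\theta^G_k(e)$ again arises as the finite-group Frobenius--Schur indicator sum, making the stated formula immediate.
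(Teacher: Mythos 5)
Your overall route is exactly the paper's: apply Theorem \ref{fs-sph} to the tabulated modular data of $\Z(\C)$, evaluate the $C^{(h)}_\phi$ contribution as a double character sum producing $\theta^G_k(e)$, collapse the $D_j$ sum to a Gauss sum via Conjecture \ref{hi-conj} using the pairing $h\leftrightarrow h^{-1}$, and then show the prefactor of $\Theta(H,kmq'')$ is $\tfrac12$. Your identity $\sum_j\zeta_j^k=\tfrac12\bigl(\sqrt{|H|}\,\Theta(H,kmq'')-1\bigr)$ is verbatim what the paper does. One small caution on the $C$-sum: the objects $C^{(h)}_\phi$ exist only for $h\neq e$, so the paper computes $\sum_{g_i,\phi}\phi(g_i)^k=\frac{|G|}{2}(\theta^G_k(e)-1)$ over half of $G\setminus\{e\}$; your full-group identity $\sum_{h,\phi}\phi(h)^k=|G|\,\theta^G_k(e)$ includes an $h=e$ stratum that indexes no actual objects, and the resulting $-\frac{|G|}{2}$ must be tracked in the constant-cancellation you defer.

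The genuine error is in the dimension bookkeeping, which you yourself flag as the crux. In $HI(G)$ the fusion rule gives $\rho^2=\mathbbm{1}+\sum_{g}g\rho$, so the Frobenius--Perron relation is $d_\rho^2=1+|G|d_\rho$, \emph{not} $d_\rho^2=|G|+|G|d_\rho$; the relation you wrote is the one for the near-group ring $NG(G,|G|)$, transplanted from Theorem \ref{fs-ng2}. Likewise there are exactly $|G|$ invertibles (including $\mathbbm{1}$), so $\qdim(\C)=|G|(1+d_\rho^2)=2|G|+|G|^2d_\rho$, not $(1+|G|)+|G|d_\rho^2$. These are not harmless: the whole simplification hinges on $\sqrt{|H|}=\sqrt{|G|^2+4}=2d_\rho-|G|$, which holds only for the correct relation, and then
$\frac{|G|d_\rho\sqrt{|G|^2+4}}{2\,\qdim(\C)}=\frac{d_\rho(2d_\rho-|G|)}{2(2+|G|d_\rho)}=\frac{2+|G|d_\rho}{2(2+|G|d_\rho)}=\frac12$,
exactly as in the paper. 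With your stated relations the coefficient evaluates (e.g.\ at $|G|=3$) to roughly $0.435\neq\tfrac12$, so the verification you postpone would fail as set up. Once the two dimension identities are corrected, your argument coincides with the paper's proof.
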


\begin{proof}
Let $d = qdim(\rho)$ in the category $\C$. Again by using Theorem \ref{fs-sph};:
\begin{align*}
\nu_k(\rho) &= \frac{1}{qdim(\C)} \bigg( qdim(B) +  \sum\limits_{\bar{\psi} \neq \psi \in \widehat{G}} qdim(A_\psi)\\ &+ \sum\limits_{e \neq h^{-1} \neq h \in G} \sum\limits_{\phi\in\widehat{G}} \theta^k_{C^h_\phi} qdim(C^h_\phi) + \sum\limits_{\gamma^{-1} \neq \gamma\in H} \theta_{D_\gamma} qdim(D_\gamma) \bigg)
\end{align*}
Letting $|G| = 2n + 1$ and $|H| = 2m + 1$ we may enumerate these odd order groups as: 
\[
G = \{ e,\, g_i,\, g_i^{-1} \, | \, 1\leq i \leq n \} \quad \textrm{and}\quad H = \{ e,\, h_j,\, h_j^{-1} \, | \, 1\leq j \leq m \}
\]
this gives us:
\begin{align*}
\nu_k(\rho) &= \frac{1}{qdim(\C)} \bigg( |G| + |G|d + |G|dn + (2+|G|d)\sum\limits_{g_i, \phi} \phi(g_i)^k + |G|d \sum\limits_{h_j \in H} \zeta_j^k \bigg)\\
\end{align*}
By the same argument as in the proofs of Theorems \ref{fs-ng1} and \ref{fs-ng2} we can see:
\[
\sum\limits_{g_i, \phi} \phi(g_i)^k = \frac{|G|}{2}(\theta^G_k(e) - 1)
\]
Hence using the expression for the center's ribbon structure from Conjecture \ref{hi-conj} and the fact that $|H| = |G|^2 + 4$ and $qdim(\C) = 2|G| + d|G|^2$ we see:
\begin{align*}
\nu_k(\rho) &= \frac{|G|}{qdim(\C)} \bigg( \frac{2 + |G|d}{2} \theta^G_k(e) + d + dn - \frac{|G|d}{2} + d\sum\limits_{h_j \in H} e^{2\pi i k m q''(h_j)} \bigg)\\
&= \frac{1}{2} \theta^G_k(e) + \frac{|G|}{2qdim(\C)} \bigg(2d + 2dn - |G|d + d(\sqrt{|H|} \Theta(H, kmq'') - 1) \bigg)\\
&= \frac{1}{2} \theta^G_k(e) + \frac{|G|d \sqrt{|G|^2 + 4}}{2 qdim(\C)} \Theta(H, kmq'')\\
&= \frac{1}{2} \theta^G_k(e) + \frac{1}{2}\Theta(H, kmq'')
\end{align*}
\end{proof}

Now we collect in the tables below the values of the Frobenius-Schur indicators for the Haagerup-Izumi categories constructed in \cite{eg3}:

\medskip

\begin{center}
    \setlength{\arrayrulewidth}{0.5mm}
    \setlength{\tabcolsep}{4pt}
    \renewcommand{\arraystretch}{2.5}
    \begin{tabular}{ | c | c | c | c | c |}
    \hline
    $G=\mathbb{Z}/(3)$ & $(H,q'')$ & $\nu_k(\rho)$ & $\nu_3(\rho)$ & $\nu_{13}(\rho)$ \\ \hline\hline
    $\mathcal{HI}\left(\sfrac{\mathbb{Z}}{(3)}, + , 1 , A_1 \right)$& $\left(\sfrac{\mathbb{Z}}{(13)}, \frac{g^2}{13}\right)$ & $\frac{1}{2}\left( 1 - \left(\frac{k}{13}\right) \right)$ & $1$ & $\frac{1+\sqrt{13}}{2}$  \\ \hline
    
    $\mathcal{HI}\left(\sfrac{\mathbb{Z}}{(3)}, + , 1 , A_2 \right)$& $\left(\sfrac{\mathbb{Z}}{(13)}, \frac{g^2}{13}\right)$ &  $\frac{1}{2}\left( 1 - \left(\frac{k}{13}\right) \right)$ & $1$ & $\frac{1+\sqrt{13}}{2}$ \\ \hline
    
    $\mathcal{HI}\left(\sfrac{\mathbb{Z}}{(3)}, -, 1, A_3 \right)$& $\left(\sfrac{\mathbb{Z}}{(13)}, \frac{2g^2}{13}\right)$ & $\frac{1}{2}\left( 1 + \left(\frac{k}{13}\right) \right)$ & $2$ & $\frac{1+\sqrt{13}}{2}$\\ \hline
    
    $\mathcal{HI}\left(\sfrac{\mathbb{Z}}{(3)}, -, 1, A_4 \right)$& $\left(\sfrac{\mathbb{Z}}{(13)}, \frac{2g^2}{13}\right)$ & $\frac{1}{2}\left( 1 + \left(\frac{k}{13}\right) \right)$ & $2$ & $\frac{1+\sqrt{13}}{2}$\\ \hline
    
    \end{tabular}
\end{center}

\medskip

\begin{center}
    \setlength{\arrayrulewidth}{0.5mm}
    \setlength{\tabcolsep}{4pt}
    \renewcommand{\arraystretch}{2.5}
    \begin{tabular}{ | c | c | c | c | c |}
    \hline
    $G=\mathbb{Z}/(5)$ & $(H,q'')$ & $\nu_k(\rho)$ & $\nu_5(\rho)$ & $\nu_{29}(\rho)$ \\ \hline\hline
    
    $\mathcal{HI}\left(\sfrac{\mathbb{Z}}{(5)}, + , 1 , A_6 \right)$& $\left(\sfrac{\mathbb{Z}}{(13)}, \frac{g^2}{29}\right)$ & $\frac{1}{2}\left( 1 - \left(\frac{k}{13}\right) \right)$ & $2$ & $\frac{1+\sqrt{29}}{2}$  \\ \hline
    
    $\mathcal{HI}\left(\sfrac{\mathbb{Z}}{(5)}, + , 1 , A_7 \right)$& $\left(\sfrac{\mathbb{Z}}{(13)}, \frac{g^2}{29}\right)$ &  $\frac{1}{2}\left( 1 - \left(\frac{k}{13}\right) \right)$ & $2$ & $\frac{1+\sqrt{29}}{2}$ \\ \hline
    
    $\mathcal{HI}\left(\sfrac{\mathbb{Z}}{(5)}, -, 1, A_8 \right)$& $\left(\sfrac{\mathbb{Z}}{(13)}, \frac{2g^2}{29}\right)$ & $\frac{1}{2}\left( 1 + \left(\frac{k}{13}\right) \right)$ & $3$ & $\frac{1+\sqrt{29}}{2}$\\ \hline
    
    $\mathcal{HI}\left(\sfrac{\mathbb{Z}}{(5)}, -, 1, A_9 \right)$& $\left(\sfrac{\mathbb{Z}}{(13)}, \frac{2g^2}{29}\right)$ & $\frac{1}{2}\left( 1 + \left(\frac{k}{13}\right) \right)$ & $3$ & $\frac{1+\sqrt{29}}{2}$\\ \hline
    
    \end{tabular}
\end{center}

\begin{rem}
See that for each of $\mathbb{Z}/(3)$ and $\mathbb{Z}/(5)$ we have two pairs of inequivalent fusion categories with the same indicators, hence the Haagerup-Izumi fusion rings do not have FS rigidity. 
\end{rem}
Note that in this case as well as the $m=|G|=13$ near-group case the pairs have centers with the same modular data (although it is not established whether the centers are equivalent). In view of this we formulate the following conjecture:
\begin{conj}
Two fusion categories with a given Grothendieck ring that are also Morita equivalent cannot be distinguished by their Frobenius-Schur indicators.
\end{conj}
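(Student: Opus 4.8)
The plan is to reduce the statement to an invariance property of the Drinfel'd center and then isolate the genuinely hard content. Let $\C_1$ and $\C_2$ be spherical fusion categories with the same based ring, $K_0(\C_1)=K_0(\C_2)=K$, so that $\Irr(\C_1)$ and $\Irr(\C_2)$ are identified, and suppose $\C_1$ and $\C_2$ are Morita equivalent. First I would use the known fact that Morita equivalent fusion categories have braided equivalent centers (cf. \cite{mu}), so that $\Z(\C_1)\simeq\Z(\C_2)$ as modular categories; write $\Z$ for this common center. In particular the two families carry identical modular data $\{(\theta_V, qdim(V))\}_{V\in\Irr(\Z)}$, and $qdim(\C_1)=qdim(\C_2)$ since $qdim(\Z)=qdim(\C_i)^2$ and the global dimension is a Morita invariant. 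Applying Theorem \ref{fs-sph} to each $\C_i$ gives
\[
\nu_k(X)=\frac{1}{qdim(\C_i)}\sum_{V\in\Irr(\Z)}\theta_V^k\, qdim(V)\,\dim_\mathbb{C}\Hom_{\C_i}(F_i(V),X),
\]
where $F_i\colon\Z\to\C_i$ is the forgetful functor. Every ingredient except the branching multiplicities $\dim_\mathbb{C}\Hom_{\C_i}(F_i(V),X)$ is manifestly the same for $i=1,2$, so the entire problem is to compare these multiplicities across the identification of $K$.

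Next I would pass to the induction functor $I_i\colon\C_i\to\Z$ right adjoint to $F_i$. By adjunction $\dim_\mathbb{C}\Hom_{\C_i}(F_i(V),X)=\dim_\mathbb{C}\Hom_{\Z}(V,I_i(X))$ equals the multiplicity $[I_i(X):V]$, so setting $T_k(M):=\sum_{V}\theta_V^k\,qdim(V)\,[M:V]$ for $M\in\Z$ we obtain
\[
\nu_k(X)=\frac{1}{qdim(\C_i)}\,T_k\!\left(I_i(X)\right).
\]
The functional $T_k$ is a twisted dimension on $K_0(\Z)$ and is determined solely by the Morita invariant modular data. Hence the conjecture is equivalent to the assertion that $T_k\bigl(I_1(X)\bigr)=T_k\bigl(I_2(X)\bigr)$ for every $X\in\Irr(K)$ and every $k$; that is, that the class $[I_1(X)]-[I_2(X)]\in K_0(\Z)$ lies in the kernel of all the functionals $T_k$.

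The main obstacle is precisely this last equality, and I expect it to be the crux of the conjecture. Realizing $\C_i$ as the category of modules over a Lagrangian algebra $L_i$ in $\Z$, the forgetful functor $F_i$ is free-module induction and the underlying object $F_iI_i(X)=\bigoplus_{Y\in\Irr(\C_i)}Y\otimes X\otimes Y^{*}$ has a class in $K_0(\C_i)=K$ depending only on the fixed based ring; but the finer decomposition of $I_i(X)$ into simple objects of $\Z$ depends on $L_i$, and two Morita equivalent representatives correspond to genuinely different Lagrangian algebras. Thus $[I_1(X):V]$ and $[I_2(X):V]$ need not agree term by term, and one must show the $\theta_V^k$-weighted sums nonetheless coincide. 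I would attack this by describing how $I(X)$ varies as $L$ ranges over Lagrangian algebras compatible with a fixed $K$ --- ideally showing that the variation is a gauge deformation orthogonal to the ribbon twist, hence annihilated by every $T_k$. As corroboration I would point to the families already treated: in Theorems \ref{fs-ng1}, \ref{fs-ng2}, and \ref{hi-fs} the indicator formulae depend only on the metric-group data (equivalently on the twists of $\Z$) and on $K_0$, never on the parameter $b$ that distinguishes Morita equivalent categorifications in Izumi's classification (Theorem \ref{iz-ng-class}); the conjecture asserts that this insensitivity is a general feature rather than an accident of these examples.
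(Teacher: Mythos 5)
The statement you were asked to prove is a \emph{conjecture} in the paper: the author offers no proof at all, only the empirical evidence of the tables (the two pairs of inequivalent $NG(\mathbb{Z}/(13),13)$ near-groups and the pairs of Haagerup--Izumi categories over $\mathbb{Z}/(3)$ and $\mathbb{Z}/(5)$, each pair sharing all computed indicators). So there is no paper proof to compare against, and your proposal --- which you candidly frame as a reduction rather than a proof --- does not close the question either. Your first steps are sound and are the natural ones: Morita equivalence gives a braided equivalence $\Z(\C_1)\simeq\Z(\C_2)$ (by \cite{mu}; one should note that to transport the $T$-matrix you also need this equivalence to respect the twists, i.e.\ to be an equivalence of ribbon categories, which does hold for centers of spherical fusion categories), global dimensions agree, and Theorem \ref{fs-sph} then localizes the entire problem in the branching multiplicities $\dim_\mathbb{C}\Hom_{\C_i}(F_i(V),X)$. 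The adjunction rewriting via $I_i$ and the Lagrangian-algebra picture are likewise standard and correct, including the observation that $F_iI_i(X)\cong\bigoplus_{Y\in\Irr(\C_i)}Y\otimes X\otimes Y^{*}$ has a class determined by $K$ alone.

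The genuine gap is the final step, and it is not a small one: the assertion that $[I_1(X)]-[I_2(X)]\in K_0(\Z)$ is annihilated by every functional $T_k$ is not a lemma awaiting verification --- it \emph{is} the conjecture, merely rewritten in the language of Lagrangian algebras. Your proposed mechanism (``the variation is a gauge deformation orthogonal to the ribbon twist'') is a hope without a precise formulation; nothing in the paper or in the cited literature supplies such an orthogonality statement, and the multiplicities $[I_i(X):V]$ genuinely differ between Lagrangian algebras, so some nontrivial cancellation must be proved. Two further cautions. First, the conjecture compares indicators across an isomorphism of based rings, which need not be the identification induced by the Morita equivalence; your reduction silently assumes these are compatible. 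Second, the paper's own remark notes that in the motivating examples it is \emph{not even established that the centers are equivalent} --- only that the computed modular data coincide --- so the examples do not yet confirm the hypothesis chain (Morita equivalence $\Rightarrow$ equivalent centers $\Rightarrow$ equal indicators) that your argument relies on; they confirm only the end coincidence. Your closing observation that the formulae of Theorems \ref{fs-ng1}, \ref{fs-ng2}, and \ref{hi-fs} depend only on the metric-group data and never on the parameter $b$ of Theorem \ref{iz-ng-class} is a fair restatement of the paper's evidence, but it is corroboration, not proof.
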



\end{document}